\newtheorem{theo}{Theorem}[section]
\newtheorem{lemm}[theo]{Lemma}
\theoremstyle{definition}
\newtheorem{defi}[theo]{Definition}
\newtheorem{nota}[theo]{Notation}
\newtheorem{rema}[theo]{Remark}
\newtheorem{exam}[theo]{Example}
\numberwithin{equation}{section}
\def\ot{\otimes}
\DeclareMathOperator{\ord}{ord}
\DeclareMathOperator{\Supp}{Supp}
\DeclareMathOperator{\GL}{GL}
\DeclareMathOperator{\rank}{rank}
\DeclareSymbolFont{cyrletters}{OT2}{wncyr}{m}{n}
\DeclareMathSymbol{\Sha}{\mathalpha}{cyrletters}{"58}
\DeclareMathOperator{\Resa}{Resultant}
\def\ol{\overline}
\newfont{\msbm}{msbm10}
\newfont{\msbms}{msbm6}
\def\Q{{\mathbb Q}}
\def\Z{{\mathbb Z}}
\def\al{\alpha}
\def\B{{\mathcal B}}
\def\P{{\mathbb P}}
\def\C{{\mathcal C}}
\def\J{{\mathcal J}}
\def\G{{\mathcal G}}
\def\AA{{\mathcal E}}
\def\om{{\omega}}
\def\F{{\mathbb F}}
\DeclareMathOperator{\Sel}{Sel}
\DeclareMathOperator{\red}{red}
\DeclareMathOperator{\Image}{Image}
\DeclareMathOperator{\Kernel}{Kernel}
\def\th{{\theta}}
\def\p{{\mathfrak p}}
\def\q{{\mathfrak q}}
\def\bw{{\mathbf w}}
\def\bP{{\mathcal{P}}}
\def\bv{{\mathbf v}}
\def\bt{{\mathbf t}}
\def\bn{{\mathbf n}}
\def\bi{\begin{itemize}}
\def\ei{\end{itemize}}
\def\la{\label}
\def\O{{\mathcal O}}
\def\yp{\upsilon}
\def\J{{\mathcal J}}
\newcommand{\be}{\begin{equation}}
\newcommand{\ee}{\end{equation}}
\newcommand{\ben}[1]{\begin{enumerate}[(#1)]}
\newcommand{\een}{\end{enumerate}}
\newcommand{\ba}{\begin{array}}
\newcommand{\ea}{\end{array}}
\newcommand{\bea}{\begin{eqnarray*}}
\newcommand{\eea}{\end{eqnarray*}}
\begin{document}

\baselineskip=17pt

\title{Extending Elliptic Curve Chabauty to Higher Genus Curves}

\thanks{The author is supported by the research grant FCT SFRH/BD/44011/2008. The author would also like to thank Samir Siksek for his valuable comments and guidance}
\author[M. Mourao]{Michael Mourao}
\address{Mathematics Institute, University of Warwick}
\email{M.Mourao@warwick.ac.uk}

\date{}

\begin{abstract} 
We give a generalization of the method of ``Elliptic Curve Chabauty'' to higher genus curves and their Jacobians. This method can sometimes be used in conjunction with covering techniques and a modified version of the Mordell-Weil sieve to provide a complete solution to the problem of determining the set of rational points of an algebraic curve $Y$. 
\end{abstract}

\subjclass[2010]{Primary 11G30 ; Secondary 11G20, 11D41, 11D45}

\maketitle

\tableofcontents{}

\section{Introduction}

The method of Chabauty and Coleman
(\cite{MR0011005},\cite{MR808103}) is a very well
established and explicit technique used to provide
reasonable and sometimes sharp upper bounds for
the set of rational points of a curve defined over $\Q$. To
determine the actual set of rational points, this
is usually used in combination with the
Mordell-Weil sieve (see for example
\cite{MR2041082},\cite{MR2433884},\cite{MR2457355}).
One first splits the analytic set
of $\Q_p$-rational points of the curve, where $p$
is a finite rational prime, into a finite disjoint
collection of 0, or residue classes
as they are often called. Then, Chabauty's argument,
made effective using Coleman's integration on
these rigid analytic spaces (\cite{MR782557}),
often allows one to show that the classes containing
known $\Q$-rational points do not contain any other rational points. The Mordell-Weil sieve is then
used  to prove that the remaining classes (i.e the
ones that appear to have no $\Q$-rational points),
actually have none. The limitation of this method
is the fact that it only applies to curves whose
Jacobians have Mordell-Weil rank less than or
equal to $g-1$, where $g$ is the genus of the
curve.
  
In a more recent development, Siksek
(\cite{siksek}) showed that when Chabauty's method
is generalised to deal with curves defined over a
number field of degree $d>1$
the limitation is usually weakened and the method
can be applied to curves whose Jacobians have
Mordell-Weil rank less than or equal to $d(g-1)$.

Often, when one is interested in the set of
rational points on a curve $Y/\Q$, a descent
argument leads us to consider the following
problem: let $C$ be a curve over a number field $K$.
Let $\psi : C \rightarrow \mathbb{P}^1$ be
a morphism defined over $K$. Determine the
set 
\begin{equation}\label{eqn:subset}
\{P \in C(K) : \psi(P) \in \mathbb{P}^1(\Q)\}.
\end{equation}
For example, 
Bruin (\cite{MR2011330}) 
explains an approach to this when $C$ is a
curve of genus $1$ using a variant of Chabauty called \lq\lq Elliptic
Curve Chabauty\rq\rq. In the present paper we explain an
extension of  Bruin's method to the case where
the curve $C$ has genus greater than $1$.
The fact that we are interested not in all
$K$-rational points of $C$, but in the subset~\eqref{eqn:subset} allows us to
weaken the Chabauty limitation on ranks even
further.

Let $J$ be the Jacobian of $C$.
Our method requires knowledge of a subgroup $L$ of $J(K)$
of finite index. Such a subgroup can sometimes, though
not always, be computed through a descent calculation
(for genus $2$ curves
see \cite{MR1829626};
for cyclic covers of the projective line see \cite{MR1465369}).

In Section \ref{section2} we start by setting up the notation and presenting how the two techniques described in the latter sections can be combined to determine the set of rational points of an algebraic curve.

In Section \ref{chabsection} the modified version of Chabauty is presented explicitly. There is a slight increase in complexity when dealing with points $P_0$ on the curve that are ramification points of the morphism $\psi$, as $\psi-\psi(P_0)$ can no longer be used as a uniformizing parameter in the neighborhoods of these points. This case is thus addressed separately from the case where $\psi$ does not ramify at $P_0$. In the end of the section, we apply the results to three examples of curves defined over a quadratic extension of $\Q$. The outcome of these examples is used in Sections \ref{section4} and \ref{section5}. 

Then, in Section \ref{section4} we show how the classical Mordell-Weil sieve can also be adapted and refined, in order to work together with the version of Chabauty presented in Section \ref{chabsection}.


Finally, in Section \ref{section5} we give an example of a genus $6$ hyperelliptic curve $Y$ defined over $\Q$ by the equation
\[
y^2=(x^3+x^2-1)\Phi_{11}(x),
\]
whose set of $\Q$-rational
points cannot be computed using the classical approach. To apply Bruin's ``Elliptic Curve Chabauty'', one needs to work over the degree $10$ number field $\Q[t]/\left(\Phi_{11}(t)\right)$, and current tools appear to be incapable of computing generators for the Mordell-Weil groups of the associated elliptic curves. We transfer this problem to a collection
of auxiliary genus $2$ curves $\{C_{/K}\}$, for an appropriate
quadratic number field $K$. Even at this step the
rank limiting inequalities given in \cite{siksek}
are not satisfied, but the
inequalities that apply to our case, are. An
implementation of our techniques in
\textsc{Magma}(\cite{MR1484478}) is then used to
successfully prove that 
\[
Y(\Q)=\{\infty\}.
\]

\section{Preliminaries}\la{section2}

\begin{nota}
Let $\O_K$ be the ring of integers of $K$ and let
$\p$ be a prime of $\O_K$. If $A$ is any
$K$-algebra and $u\in A$, we we will denote by
$u^{\p}$, the image of $u$ under the injection
$A\rightarrow A\ot_KK_\p$, where $K_\p$ is the
completion of $K$ at $\p$.
\end{nota}

\begin{defi}
Let $V$ be a non-singular algebraic variety
defined over $K$ and let $p$ be a rational prime,
unramified in $K$ such that $V$ has good reduction
for every $\p\mid p$. Denote the residue field at
$\p$ by $k_\p$. We define the map
$\red_p:V(K)\rightarrow\prod\limits_{\p\mid
p}V(k_{\p})$ to be the diagonal product of the
usual reduction maps  $\red_{\p}:V(K)\rightarrow
V(k_{\p})$. When $V$ is an abelian variety, these
maps are actually homomorphisms of abelian groups.
\end{defi}

\begin{defi}\la{firstdef} Let $C_{/K}$ be a
non-singular algebraic curve. For
$\bP\in\prod\limits_{\p\mid p}C(k_{\p})$ define
\[
\B_p(\bP)=\left\{P\in C(K) :
\red_p(P)=\bP\right\}
\]
and for $P\in C(K)$ define
the $p$-residue class of $P$ to be
\[B_p(P):=\B_p\left(\red_p(P)\right).\]
Fix a morphism $\psi:C\rightarrow \P^1$ defined over $K$. Let 
\[
\G:=\left\{\bP\in \prod\limits_{\p\mid p}C(k_{\p}) : \ol{\psi^\p}(\bP_\p)=\ol{\psi^\q}(\bP_\q)\in\P^1(\F_p)\quad\forall \p,\q\mid p\right\}.
\]
and
\[
H:=C(K)\cap\psi^{-1}\left(\P^1(\Q)\right).
\]
\end{defi}

Consider the following commutative diagram
$$\xymatrix{
H \ar[rr]^{\psi} \ar[dd]_{\red_p} & &\P^1(\Q)\ar[dd]^{\red_p}\\
&&\\
\prod\limits_{\p\mid p}C(k_{\p})\ar[rr]^{\prod\limits_{\p\mid p}\ol{\psi^\p}} && \prod\limits_{\p\mid p}\P^1(\F_p)
}$$

Suppose we have a subset $H'\subseteq H$. In practice $H'$ will be the subset of $H$ found through a computer search. The aim of this paper is to provide methods that often allow one to show that
\begin{enumerate}[(a)]
\item for all $P\in H'$ we have that $B_p(P)\cap H=\{P\}$ (by using a modification of ``Elliptic Curve Chabauty'') and
\item for all $\bP\in \G \setminus\red_p(H')$ we have that $\B_p(\bP)\cap H=\emptyset$ (by using a modification of the Mordell-Weil sieve).
\end{enumerate} 

(a) and (b) put together imply that $H'=H$.

\section{Chabauty}\label{chabsection}

In order to produce the bounds given by Chabauty's method, we first need to use Coleman's theory of $p$-adic integration. Let $C_{/K}$ be an irreducible, non-singular algebraic curve defined over a number field $K$, $p$ be an odd rational prime such that for each $\p$ a prime of $\O_K$ with $\p\mid p$ and residue field $k_\p=\O_K/\p$, we have that $C$ has good reduction at $\p$. Let $J_{/K}$ be the Jacobian variety of $C$. Fix a basepoint $Q\in C(K)$ and denote by $\iota:C\rightarrow J$ the Abel-Jacobi map given by
\[
\iota(P)=[P-Q].
\] 
Denote by $K_\p$ the completion of $K$ with respect to $\p$ and by $\O_\p$ the integers of $K_\p$. Let $\C_{/\O_\p}$ and $\J_{/\O_\p}$ be minimal regular proper models for $C$  and $J$ over $\O_\p$. Denote by $\Omega_{C/K_\p}$ and $\Omega_{J/K_\p}$ the $K_\p$-spaces of global holomorphic $1$-forms on $C$ and $J$. Coleman (\cite{MR782557}) showed that we have a well defined notion of an integral satisfying:
\ben{i}
\item  $\displaystyle\int_{P}^{P'}\omega=-\int_{P'}^{P}\omega$.
\item $\displaystyle\int_{P}^{P'}\omega+\int_{P'}^{P''}\omega=\int_{P}^{P''}\omega$. 
\item $\displaystyle\int_{P}^{P'}\omega+\int_{P}^{P'}\omega'=\int_{P}^{P'}\omega+\omega'$.
\item $\displaystyle\int_{P}^{P'}\al\omega=\al\int_{P}^{P'}\omega$.
\item $\displaystyle\int_{P}^{P'}\iota^*\left(\omega_J\right)=\int_{\iota(P)}^{\iota(P')}\omega_J$.
\een
for $\al\in K_\p$, $P,P',P''\in C(K_\p)$, $\omega,\omega'\in\Omega_{C/K_\p}$, $\omega_J\in\Omega_{J/K_\p}$ and $\iota^*$ the induced isomorphism from $\Omega_{J/K_\p}$ to $\Omega_{C/K_\p}$.

Furthermore, we have the following bilinear pairing
\begin{equation}
\displaystyle\Omega_{C/K_\p}\times J(K_\p)\rightarrow K_\p,\quad \left(\omega,\left[\sum_i P_i'-\sum_i P_i\right]\right)\mapsto\sum_i\int_{P_i}^{P_i'}\omega
\end{equation}
which is $K_\p$-linear on the left with kernel equal to $0$ and $\Z$-linear on the right with kernel the torsion part of $J(K_\p)$.

\subsection{Unramified Case}

Let $C_{/K}$ be a non-singular algebraic curve of genus $g$ and $\psi:C_{/K}\rightarrow\P^1$ a morphism to $\P^1$ which is defined over $K$. Suppose $P_0\in H:=C(K)\cap\psi^{-1}\left(\P^1(\Q)\right)$. Suppose further that $\psi$ is unramified at $P_0$. If $\psi(P_0)=\infty$ replace $\psi$ by $1/\psi$. Now fix a rational prime $p$ such that:

\begin{description}
\item[(p1)] $p$ does not ramify in $K$, i.e. $p\O_K=\p_1\ldots\p_m$ with the $\p_i$ distinct prime ideals.
\item[(p2)] $C_{/K}$ has good reduction at $\p_i$ for $1\leq i\leq m$.
\item[(p3)] The reduced point $\red_{\p_i}(P_0)$ is not a ramification point of the reduced map $\ol{\psi^{\p_i}}$ for $1\leq i\leq m$.
\end{description}

Fix $\p\in\{\p_1,\ldots,\p_m\}$. Let $\C_{/\O_\p}$ be a proper regular minimal model for $C$ over $\O_\p$. Now $\tau:=\psi-\psi(P_0)\in K(C)$ is a uniformizer for $C_{/K}$ at $P_0$. By \textbf{(p1)}, \textbf{(p2)} and \textbf{(p3)} $\tau^\p$ is a well-behaved uniformizer for the generic fibre $\C_{0}$ at $P_0$ and $\ol{\tau^\p}$ is a uniformizer for the special fibre $\C_{\p}$ at $\red_\p(P_0)$.  Equivalently $\tau^\p$ together with $\pi$, a uniformizer of $K_\p$, generate the maximal ideal of the local ring $\O_{C,P_0}$. Let $P\in B_p(P_0)\cap H$. 

Let $\om_1^\p,\ldots,\om_g^\p$ be a basis for the $\O_\p$-module $\Omega_{\C_\p/\O_\p}$ of holomorphic $1$-forms on $\C_{/\O_\p}$. 

Suppose that $L=\langle D_1,\ldots,D_r\rangle$ is a finite index subgroup of the Mordell-Weil group $J(K)$ and that $[J(K):L]=N$. Suppose further that
\begin{equation}\label{decomposition}
N[P-P_0]=n_1'D_1+\ldots +n_r'D_r
\end{equation}
in $J(K)$. Set $n_q=n_q'/N\in\Q$ for $1\leq q\leq r$.
Note that 
\[
t^\p:=\tau^\p(P)=\psi^\p(P)-\psi^\p(P_0)=\left(\psi(P) -\psi(P_0)\right)^\p=\psi(P)-\psi(P_0)\in\Q
\]
for every $\p\mid p$, so
\[
t^{\p_1}=\ldots=t^{\p_m}=:t
\]
and since $\ord_{\p_c}(t^{\p_c})\geq 1$ for $1\leq c\leq m$ we have that $\ord_p(t)\geq 1$. In other words, $t\in p\Z_p$.

We shall need the following standard result.

\begin{lemm}
Let $p$ be an odd rational prime that does not ramify in $K$. Let $\p$ be a prime of $\O_K$ with $\p\mid p$. Fix a minimal regular model $\C_{/\O_\p}$ for $C$ over $\O_\p$. Let $P_0\in C(K_\p)$ and let $\tau\in K(C)$ be a well-behaved uniformizer at $P_0$. Let $\om\in\Omega_{\C/\O_\p}$, and write
\[
\al=\left.\frac{\omega}{d\tau}\right|_{\tau=0}.
\]
Then $\al\in\O_\p$. Moreover, for all $P\in B_p(P_0)$,
\be\la{basisexp}
\int_{P_0}^P\om=\al\tau(P)+\beta\tau(P)^2
\ee
for some $\beta\in\O_\p$ (which depends on $P$).
\end{lemm}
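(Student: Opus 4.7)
The plan is to pass to the completed local ring at $P_0$ on the model $\C_{/\O_\p}$ and then invoke Coleman's formula expressing his integral on a residue disk as the termwise formal antiderivative of the power series representation of the differential. Since $p$ is unramified in $K$ and $C$ has good reduction at $\p$, the model is smooth at $P_0$. The hypothesis that $\tau$ is well-behaved means precisely that $\tau$ together with a uniformizer $\pi$ of $K_\p$ generates the maximal ideal of $\O_{\C,P_0}$, so by the Cohen structure theorem the completion of this local ring is isomorphic to $\O_\p[[\tau]]$. Consequently $\Omega_{\C/\O_\p}$ is locally free of rank one near $P_0$, generated by $d\tau$, and any $\om \in \Omega_{\C/\O_\p}$ admits a unique expansion
\[
\om = f(\tau)\,d\tau, \qquad f(\tau)=\sum_{i\geq 0}a_i\tau^i \in \O_\p[[\tau]].
\]
The coefficient $a_0$ is by construction $(\om/d\tau)|_{\tau=0}=\al$, which gives $\al\in\O_\p$ immediately.

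Next I would apply Coleman's integration formula on the residue disk $B_p(P_0)$: setting $t:=\tau(P)\in\p\O_\p$, one has
\[
\int_{P_0}^P \om = \sum_{i\geq 0}\frac{a_i}{i+1}\,t^{i+1} = \al\,t + t^2\beta, \qquad \beta := \sum_{i\geq 1}\frac{a_i}{i+1}\,t^{i-1}.
\]
So the content of the lemma reduces to verifying that the series defining $\beta$ converges in $\O_\p$.

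The main (and only nontrivial) step will be this $p$-adic convergence/integrality estimate, which is where the hypothesis that $p$ is odd enters. Since $a_i\in\O_\p$ and $\ord_\p(t)\geq 1$, each term of $\beta$ has $\p$-adic valuation at least $(i-1)-\ord_p(i+1)$, so it suffices to show $(i-1)\geq\ord_p(i+1)$ for all $i\geq 1$. For $i=1$ this reads $0\geq\ord_p(2)$, which holds precisely because $p\neq 2$. For $i\geq 2$ one has $\ord_p(i+1)\leq\log_p(i+1)\leq i-1$ for any $p\geq 3$, by the elementary induction $p^{i-1}\geq i+1 \Rightarrow p^i\geq 3(i+1)\geq i+2$. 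Hence every term of $\beta$ lies in $\O_\p$, and the valuations tend to infinity, giving both convergence and $\beta\in\O_\p$.

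I expect no serious obstacle beyond setting up the identification of the completed local ring as $\O_\p[[\tau]]$ (where the geometric hypotheses of good reduction and the well-behavedness of $\tau$ are absorbed) and then running the $p$-odd valuation estimate above; the rest is the standard formalism of Coleman integration on a residue disk.
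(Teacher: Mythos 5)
Your argument is correct: the paper itself gives no proof of this lemma, simply citing Siksek's ``Explicit Chabauty over number fields'' (Lemma 3.1), and your reconstruction --- expanding $\om=f(\tau)\,d\tau$ with $f\in\O_\p[[\tau]]$ via smoothness of the model and the well-behavedness of $\tau$, termwise Coleman integration on the residue disk, and the valuation estimate $\ord_p(i+1)\leq i-1$ for $i\geq 1$ and $p$ odd --- is exactly the standard proof found in that reference. No gaps; the only point worth making explicit is that $\ord_\p(t)\geq 1$ because $P$ and $P_0$ have the same reduction and $\tau$ reduces to a uniformizer on the special fibre, which you use implicitly.
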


\begin{proof}
For a proof of this see \cite{siksek} Lemma 3.1
\end{proof}

Let $\p\mid p$ and $\om\in\Omega_{\C/\O_\p}$ be a holomorphic $1$-form. We will define the matrix $A_{\p,\om}\in M_{d_\p,r}(\Q_p)$ and the column vector $\bw_{\p,\om}\in\Z_p^{d_\p}$, where $d_\p=[K_\p:\Q_p]$, as follows:\\
Using \eqref{decomposition} and \eqref{basisexp} together we get the following equality in $K_\p$
\be\la{expand}
\al_1n_1+\ldots+\al_rn_r=\al t+\beta t^2,
\ee 
where $\al_q=\int_{D_q}\om$ for $1\leq q \leq r$. Fix an integral basis $\th_1 ,\ldots,\th_{d_\p}$ for $\O_\p$ over $\Z_p$. We can write $\al_q=a_{1,q}\th_1 + \ldots+a_{d_\p,q}\th_{d_\p}$, $\al=a_1\th_1 +\ldots+a_{d_\p}\th_{d_\p}$ and $\beta=b_1\th_1 +\ldots+b_{d_\p}\th_{d_\p}$ and equate coefficients to get the following system of equations in $\Q_p$ 
\begin{eqnarray*}
a_{1,1}n_1+\ldots+a_{1,r}n_r &=& a_1t +  b_1t^2\\ 
\vdots &=& \vdots \\ 
a_{d_\p,1}n_1+\ldots+a_{d_\p,r}n_r &=& a_{d_\p}t + b_{d_\p}t^2.
\end{eqnarray*}
Define $A_{\p,\om}=\left(a_{c,q}\right)_{1\leq c\leq d_\p,1\leq q \leq r}$ and $\bw_{\p,\om}$ to be the column vector $(a_1,\ldots,a_{d_\p}) \in \Z_p^{d_\p}$.

Now define the matrix $A_\p\in M_{gd_\p,r}(\Q_p)$ and the column vector $\bw_{\p}\in\Z_p^{gd_p}$ as 
\[
A_\p=
\left(\ba{c} 
A_{\p,\om_1^\p}\\ 
\vdots \\ 
A_{\p,\om_g^\p}
\ea\right)
\quad\text{and}\quad\bw_{\p}=
\left(\ba{c}
\bw_{\p,\om_1^\p}\\ 
\vdots \\ 
\bw_{\p,\om_g^\p}
\ea\right)
.
\] 
Finally define the matrix $A\in M_{dg,r}(\Q_p)$ and the column vector $\bw\in\Z_p^{gd}$ as
\[
A=
\left(\ba{c} 
A_{\p_1}\\ 
\vdots \\ 
A_{\p_m}
\ea \right)
\quad\text{and}\quad\bw =
\left(\ba{c}
\bw_{\p_1} \\ 
\vdots \\ 
\bw_{\p_m}\ea\right)
.
\]
We now have 
\[
A \bn =t\bw +t^2\bw',
\]
where $\bn=(n_1,\ldots,n_r)\in\Q^r$ and $\bw'\in\Z_p^{gd}$. Let $h$ be the smallest non-negative integer such that $p^hA$ has entries in $\Z_p$ and $U$ be the unimodular matrix in $M_{gd,gd}(\Z_p)$ such that 
\[
A'=U\left(p^hA\right)
\]
is in Hermite Normal Form. Let $j$ be the number of zero rows of $A'$ and denote by $\AA_p(P_0)$ the set containing only the column vector in $\Z_p^j$ formed by the last $j$ rows of $U\bw$. The reason for defining the set $\AA_p(P_0)$ that only contains a single element will become apparent when we discuss how we deal with the case of $\psi$ being ramified at $P_0$ in \ref{ssramified}.

\begin{theo}\la{firsttheo}
If the unique $E\in\AA_p(P_0)$ satisfies $\ol E\neq \mathbf 0$ modulo $p$, then $B_p(P_0)\cap H=\{P_0\}$. 
\end{theo}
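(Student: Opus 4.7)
The plan is to argue by contradiction: I would fix $P\in B_p(P_0)\cap H$ with $P\neq P_0$ and deduce that $\ol E=\mathbf{0}$ modulo $p$, contradicting the hypothesis. Everything needed has already been assembled just before the theorem: the identity
\[
A\bn=t\bw+t^2\bw'
\]
follows from the decomposition $N[P-P_0]=\sum_q n_q'D_q$ in $J(K)$ combined with the expansion \eqref{basisexp} applied to each basis form $\om_i^\p$, after reading off coefficients in the integral bases $\th_1,\ldots,\th_{d_\p}$. The remaining work is therefore clean $p$-adic linear algebra on this identity.

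The key manipulation is to multiply the identity through by $p^h$ and then apply the unimodular $U$ on the left, turning $p^hA$ into its Hermite form $A'$, so that
\[
A'\bn=tp^h(U\bw)+t^2p^h(U\bw').
\]
The last $j$ rows of $A'$ vanish by construction, so projecting onto those coordinates and cancelling the nonzero scalar $p^h$ would yield
\[
\mathbf{0}=tE+t^2F\quad\text{in}\;\Z_p^j,
\]
where $F\in\Z_p^j$ is the block formed by the last $j$ coordinates of $U\bw'$.

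Finally, because $\tau^\p$ is a well-behaved uniformizer at $P_0$ on each $\p$-residue disk, the hypothesis $P\neq P_0$ forces $t=\psi(P)-\psi(P_0)\neq 0$, so I may divide to obtain $E=-tF$; since $t\in p\Z_p$ and $F\in\Z_p^j$, this places $E$ in $p\Z_p^j$, i.e.\ $\ol E\equiv\mathbf{0}$ mod $p$, which is the desired contradiction. I do not foresee a genuine obstacle here. The points requiring care, rather than obstacles, are the $p$-adic bookkeeping when multiplying through by $p^h$ (the left-hand side $A\bn$ a priori lives only in $\Q_p^{gd}$ since $\bn\in\Q^r$, whereas after multiplication by $p^h$ both sides become integral) and the elementary deduction that $t=0$ forces $P=P_0$, which follows from the uniformizer property of $\tau^\p$ on the $\p$-residue disk at any single $\p\mid p$.
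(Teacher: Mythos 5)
Your proposal is correct and follows essentially the same route as the paper: both start from the identity $A\bn=t\bw+t^2\bw'$, apply $p^h$ and the unimodular $U$, project onto the $j$ zero rows of the Hermite form to get $tE+t^2\bw''=\mathbf 0$, and then use $t\neq 0$ (from $\psi-\psi(P_0)$ being a local isomorphism at the unramified point $P_0$) together with $\ord_p(t)\geq 1$ to force $\ol E\equiv\mathbf 0\bmod p$. The only cosmetic difference is that you divide by $t$ where the paper divides by $p^{\ord_p(t)}$ and reduces; the two manipulations are equivalent.
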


\begin{proof}
Let $P\in B_p(P_0)$ with $P\neq P_0$ and $\psi(P)\in\P^1(\Q)$. Using the fact that $\psi-\psi(P_0)$ is a local isomorphism we see that 
\[
s=\ord_p(t)
\]
is finite. We have that for the unique $E\in \AA_p(P_0)$
\[
tE+t^2\bw''=\mathbf{0},
\]
where $\bw''$ is the vector formed by the last $j$ rows of $U\bw'$. Now divide by $p^s$ and reduce modulo $p$ to get that 
\[
v\ol E\equiv\mathbf 0\mod p.
\]
But this is a contradiction since both $v$ and $\ol E$ are non-zero modulo $p$.
\end{proof}

\subsection{Ramified Case}\la{ssramified}

Suppose we have $P_0\in H'$ such that $\psi$ ramifies at $P_0$. Write $e_\psi(P_0)=e$ for the ramification index of $\psi$ at $P_0$. Then $e\geq 2$. Define the modified properties:
\begin{description}
\item[(p1)$^{\text{split}}$]$p\O_K=\p_1\ldots\p_d$, in other words $p$ splits completely in $\O_K$, and $\gcd(p,e)=1$.
\item[(p1)$^{\text{inert}}$]$p\O_K=\p$, in other words $p$ is inert in $\O_K$.
\item[(p3)$^{\text{ram}}$]$e_{\ol{\psi^\p}}\left(\red_\p(P_0)\right)=e$ for every prime $\p\mid p$
\end{description}
If $[K:\Q]=2$ then choose an odd rational prime $p$ that satisfies either \textbf{(p1)$^{\text{split}}$,(p2)} and \textbf{(p3)$^{\text{ram}}$} or \textbf{(p1)$^{\text{inert}}$,(p2)} and \textbf{(p3)$^{\text{ram}}$}, otherwise choose $p$ such that it satisfies \textbf{(p1)$^{\text{split}}$,(p2)} and \textbf{(p3)$^{\text{ram}}$}.

\subsubsection*{$p$ splits} Let us first consider the case where $K$ is a number field of degree $d>1$ over $\Q$ with ring of integers $\O_K$ and that $p$ is an odd rational prime that splits completely over $K$. Equivalently $p\O_K=\p_1\ldots\p_d$ with each $\p_c$, $1\leq c\leq d$, a prime of $\O_K$ of norm equal to $p$. To simplify the notation let $K_c$ denote the completion of $K$ with respect to $\p_c$ and $\O_c$ be the ring of integers of $K_c$. Also let $\C_{/\O_c}$ be a minimal, regular and proper model for $C$ over $\O_c$.
\begin{lemm}\la{lemma1}
Suppose that $P_0\in H$ has $e_{\psi}(P_0)=e\geq 2$ and let $p$ be a prime satisfying \textbf{(p1)$^{\text{split}}$,(p2)} and \textbf{(p3)$^{\text{ram}}$}. Let $\tau_c$ be a well-behaved uniformizer of $\C_{/\O_c}$ at $P_0$ and denote by 
\[
\yp_cT_c^e+\sum_{i=1}^\infty \rho_{c,i} T_c^{e+i}\in K_{\p_c}[[T_c]]=\Q_p[[T_c]]
\]
the formal powerseries expansion of $\psi^{\p_c}-\psi(P_0)$ in terms of $\tau_c$. Suppose there exists $P\in\left(B_p(P_0)\cap H\right)\setminus\{P_0\}$. Then for every $c\in\{2,\ldots,d\}$, $\yp_1T_1^e-\yp_cT_c^e\in\Z_p[T_1,T_c]$ has a linear factor $T_1-\hat\gamma_cT_c$ satisfying
\[
t_1-\hat\gamma_c t_c\equiv 0 \mod p^{s(e+1)},
\]
where $t_c:=\tau_c(P)$ for $1\leq c\leq d$ and $s=\ord_p(t_1)=\ord_p(t_c)\geq 1$. 
\end{lemm}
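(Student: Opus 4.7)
The plan is to exploit the constraint $\psi(P)\in\Q$ to link the two local pictures at $\p_1$ and $\p_c$, via the formal expansions of $\psi-\psi(P_0)$ in terms of $\tau_1$ and $\tau_c$.

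First, set $t:=\psi(P)-\psi(P_0)\in\Q$ (this uses $P\in H$). Evaluating the formal expansion at $P$ and embedding into $K_{\p_c}=\Q_p$ gives, for every $c\in\{1,\ldots,d\}$,
\[
t=\yp_c t_c^e+\sum_{i\ge 1}\rho_{c,i}t_c^{e+i}\quad\text{in }\Q_p.
\]
Since $\tau_c$ is well-behaved and $e_\psi(P_0)=e$, the coefficient $\yp_c$ lies in $\Z_p^\times$ and the $\rho_{c,i}$ lie in $\Z_p$; moreover $t_c\in p\Z_p$ because $P\in B_p(P_0)$, so the series converges and $\ord_p(t)=e\cdot\ord_p(t_c)$. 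This immediately justifies that $\ord_p(t_c)$ is independent of $c$, so the notation $s=\ord_p(t_1)=\ord_p(t_c)\ge 1$ is well defined.

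Second, subtract the expansion for $c=1$ from the expansion for general $c$:
\[
\yp_1 t_1^e-\yp_c t_c^e=\sum_{i\ge 1}\rho_{c,i}t_c^{e+i}-\sum_{i\ge 1}\rho_{1,i}t_1^{e+i}.
\]
Every term on the right has $\ord_p\ge(e+1)s$, so
\[
\ord_p(\yp_1 t_1^e-\yp_c t_c^e)\ge(e+1)s.
\]
Because $\gcd(p,e)=1$, Hensel's lemma lifts the $e$ distinct $e$-th roots of $\yp_c/\yp_1$ to the ring of integers of an unramified extension of $\Q_p$, giving a factorization
\[
\yp_1 T_1^e-\yp_c T_c^e=\yp_1\prod_{i=1}^{e}(T_1-\gamma_{c,i}T_c).
\]

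The final step is to single out the right root $\hat\gamma_c$. Each factor $t_1-\gamma_{c,i}t_c$ has $\ord_p\ge s$, since $t_1$ and $\gamma_{c,i}t_c$ both have $\ord_p=s$ (the $\gamma_{c,i}$ being units). For $i\ne j$, the difference $\gamma_{c,i}-\gamma_{c,j}$ is a $p$-adic unit (the non-trivial $e$-th roots of unity are distinct from $1$ modulo $p$ when $\gcd(p,e)=1$), so $\ord_p((\gamma_{c,i}-\gamma_{c,j})t_c)=s$ exactly. The ultrametric triangle inequality then forces at most one of the factors to have $\ord_p$ strictly exceeding $s$; combined with the total lower bound derived above, all the excess valuation must be concentrated in a single factor $T_1-\hat\gamma_c T_c$, yielding the claimed congruence.

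The main obstacle is this concentration step: one must carefully distribute the $(e+1)s$ total valuation across the $e$ factors and verify that $e-1$ of them sit exactly at valuation $s$, leaving the distinguished factor to absorb the full excess. The key technical input here is the rigidity afforded by $\gcd(p,e)=1$, which is precisely why the hypothesis \textbf{(p3)$^{\text{ram}}$} is bundled with a primality condition excluding $p\mid e$.
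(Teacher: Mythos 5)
Your strategy coincides with the paper's up to the key congruence $\ord_p\!\left(\yp_1t_1^e-\yp_ct_c^e\right)\geq s(e+1)$: both proofs obtain it by evaluating the two power series at $P$ and using $\psi(P)\in\Q$ to equate them. From there the paper applies Hensel's lemma over $\Z_p$ directly to the approximate root $\gamma_c=t_1/t_c$ of $\yp_1X^e-\yp_c$, whereas you factor $\yp_1T_1^e-\yp_cT_c^e$ completely over an unramified extension and argue by ultrametric concentration. A first, repairable, omission: the lemma asserts the linear factor lies in $\Z_p[T_1,T_c]$, but your roots $\gamma_{c,i}$ a priori live only in an unramified extension of $\Q_p$. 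You must add that the distinguished root --- the unique one congruent to $\gamma_c\in\Z_p$ modulo $p$ --- is fixed by Frobenius (which permutes the roots, fixes $\gamma_c$, and preserves valuations) and hence lies in $\Z_p$; or simply run Hensel over $\Z_p$ as the paper does.

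The genuine gap is the last quantitative step. Your own bookkeeping shows that $e-1$ of the $e$ factors $t_1-\gamma_{c,i}t_c$ have valuation exactly $s$, so the distinguished factor absorbs at least $(e+1)s-(e-1)s=2s$ --- not $(e+1)s$. The phrase ``all the excess valuation is concentrated in a single factor'' is correct, but the excess over the trivial bound $es$ is only $s$, so concentration yields $s+s=2s$; for $e\geq 2$ this is strictly weaker than the congruence modulo $p^{s(e+1)}$ you claim to have proved, and no further squeezing is possible since generically the total valuation of the product is exactly $(e+1)s$. (For what it is worth, the paper's own proof makes an analogous unjustified jump: it asserts $\gamma_c\equiv\hat\gamma_c\bmod p^{se}$ where its Hensel argument only delivers $\bmod p^{s}$. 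The honest output of either argument is $t_1-\hat\gamma_ct_c\equiv 0\bmod p^{2s}$, which is all that is used downstream --- Theorem~\ref{theorem2} immediately weakens the congruence to modulo $p^{s+1}$, and $2s\geq s+1$.) As written, your proof establishes a correct and sufficient statement with exponent $2s$ but does not establish the stated exponent $s(e+1)$; you should either prove the lemma with the weaker exponent and note that it suffices for the applications, or supply the missing argument for the stronger one.
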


\begin{proof}
By substituting $P$ in the powerseries expansion of $\psi^{\p_c}-\psi(P_0)$ we get the $d$ equations
\[
\psi(P)^{\p_c}-\psi(P_0)=\yp_ct_c^e+\rho_ct_c^{e+1}
\]
where $\yp_c\in\Z_p^*$, since $p$ satisfies \textbf{(p3)$^{\text{ram}}$}, and $\rho_c\in\Z_p$. Since $\psi(P)\in\Q$ we have that
\[
\psi(P)^{\p_1}=\ldots=\psi(P)^{\p_d}=\psi(P)\in\Q.
\]
In particular we have that
\[
\ord_p\left(t_1\right)=\ldots=\ord_p\left(t_d\right),
\]
since $\yp_c\in\Z_p^*$ for every $c\in\{1,\ldots,d\}$. Let us denote this positive integer by $s$. We have the following $d-1$ congruences
\[
\yp_1t_1^e-\yp_ct_c^e\equiv 0\mod p^{s(e+1)},
\]
for $c\in\{2,\ldots,d\}$. By letting $\gamma_c=\frac{t_1}{t_c}$ we have that $\gamma_c$ is a solution to
\[
\yp_1X^e-\yp_c\equiv 0\mod p^{se}.
\]
Since the derivative of this polynomial is equal to $eX^{e-1}$ and $e$ and $\gamma_c$ are units modulo $p$ we can use Hensel's Lemma to lift $\gamma_c$ to a solution $\hat\gamma_c\in\Z_p^*$. We then have that
\[
(X-\hat\gamma_c)\mid (\yp_1X^e-\yp_c)\quad \text{and} \quad(T_1-\hat\gamma_cT_c) \mid(\yp_1T_1^e-\yp_cT_c^e).
\]
Furthermore we have that
\[
\gamma_c\equiv\hat\gamma_c\mod p^{se}
\]
which implies that
\[
t_1-\hat\gamma_c t_c\equiv 0\mod p^{s(e+1)}.
\]  
\end{proof}

Let $P_0,e,p,t_1,\ldots,t_d,\yp_1,\ldots,\yp_d$ be as in the statement of Lemma \ref{lemma1} above. Suppose that $(X-\hat\gamma_c^{(1)}),\ldots,(X-\hat\gamma_c^{(l_c)})$ are all the linear factors of $\yp_1X^e-\yp_c$ for $c\in\{2,\ldots,d\}$. Define the matrices $E_{(i_2,....,i_d)}\in M_{d-1,d}(\Z_p)$ by
\[
E_{(i_2,....,i_d)}=
\left(\ba{cccc} 
1 & -\hat\gamma_2^{(i_2)} & \ldots & 0 \\ 
\vdots & \vdots & \vdots & \vdots \\ 
1 & 0 & \ldots & -\hat\gamma_d^{(i_d)} 
\ea\right)
\] 
for $i_c\in\{1,\ldots,l_c\}$. 

Let $\{\om_f^c\}_{1\leq f \leq g}$ be a basis of $\Omega_{\C/\O_c}$ for $1\leq c\leq d$ and let $L_0=\langle D_1,\ldots,D_r\rangle$ be a subgroup of $J(K)$ of index $N\in\Z_{>0}$. Now fix a $c\in\{1,\ldots,d\}$. Define $A_c$ to be the $g\times r$ matrix with entries in $\Q_p$ defined by $A_c=\left(a_{f,q}\right)_{1\leq f\leq g,1\leq q\leq r}$ where
\[
a_{f,q}=\int_{D_q}\om_f^c
\]
and by $\bw_c$ the $g\times d$ matrix with zero entries everywhere apart from the $c$-th column which will consist of the vector $(a_1,\ldots,a_g)$ where $a_f$ is the coefficient of the linear term in the (formal) powerseries expansion of 
\[
\int_{[P-P_0]}\om_f^c
\] 
in terms of $\tau_c$ (the uniformizer of $\C_{/\O_c}$ at $P_0$). Now let $A$ be the $dg\times r$ matrix 
\[
A=
\left(\ba{c}
A_1 \\ 
\vdots \\
A_d
\ea\right)
\] 
with entries in $\Q_p$ and $\bw$ be the $gd\times d$ matrix
\[
\bw=
\left(\ba{c}
\bw_1 \\ 
\vdots \\ 
\bw_d
\ea\right)
\]
with entries in $\Z_p$. Let $h$ be the smallest non-negative integer such that $p^hA$ has entries in $\Z_p$ and $U\in \GL_{dg}(\Z_p)$ be the unimodular matrix such that
\[
A'=U(p^hA)
\]
is in Hermite Normal Form. We denote by $E_0\in M_{j,d}(\Z_p)$ the matrix formed by the last $j$ rows of $U\bw$, where $j$ is the number of zero rows of $A'$. Finally define the set
\[
\AA_p(P_0)=
\left\{\ba{ll}
\emptyset &\text{ if }\yp_1X^e-\yp_c\text{ has no linear}\\
 & \text{ factors for some }c\in\{2,\ldots,d\}, \\
\left\{
\left(\ba{c}
E_0 \\ 
E_{(i_2,\ldots,i_d)}
\ea\right)
 : 1\leq i_c\leq l_c\right\} & \text{ otherwise.}
\ea\right.
\]

\begin{theo}\label{theorem2}
Suppose $P_0\in H$ and $p$ is a rational prime satisfying \textbf{(p1)$^{\text{split}}$,(p2)} and \textbf{(p3)$^{\text{ram}}$}. If $\rank_{\F_p}(\overline E)=d$ for every $E\in\AA_p(P_0)$ or if $\AA_p(P_0)=\emptyset$, then $B_p(P_0)\cap H=\{P_0\}$.
\end{theo}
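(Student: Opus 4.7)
The plan is to mirror the unramified argument of Theorem~\ref{firsttheo}, with the single scalar $t$ replaced by the vector $\bt = (t_1,\ldots,t_d)^{T} \in \Z_p^d$ of uniformizer values and with the additional linear congruences on $\bt$ coming from Lemma~\ref{lemma1}. Suppose for contradiction that there exists $P \in (B_p(P_0) \cap H)\setminus\{P_0\}$; set $t_c = \tau_c(P)$ and $s = \ord_p(t_1)$. A first observation is that if $\AA_p(P_0) = \emptyset$, then by construction $\yp_1 X^e - \yp_c$ has no linear factor for some $c \in \{2,\ldots,d\}$, but Lemma~\ref{lemma1} says that the existence of such a $P$ forces all these linear factors to be present, giving an immediate contradiction and settling that case.

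Assuming $\AA_p(P_0) \neq \emptyset$, the Coleman half of the argument combines the decomposition $N[P-P_0] = \sum_q n_q' D_q$ with the local expansion $\int_{P_0}^{P} \om_f^c = a_f\,t_c + b_f\,t_c^2$ coming from the analogue of Lemma~3.3 applied with uniformizer $\tau_c$ (with $a_f, b_f \in \Z_p$; because $p$ splits completely we have $d_{\p_c} = 1$, so no integral-basis decomposition is needed). Assembling these identities for all $c$ and $f$ yields the matrix equation
\[
A\,\bn = \bw\,\bt + \mathbf{r}, \qquad \mathbf{r} \in p^{2s}\Z_p^{gd}.
\]
Multiplying through by $U$ and extracting the bottom $j$ rows, which kill the left-hand side because $U(p^h A)$ has $j$ zero rows at the bottom, gives $E_0\,\bt \equiv \mathbf{0} \pmod{p^{2s}}$.

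Next I stack this with the Hensel-lifted congruence $E_{(i_2,\ldots,i_d)}\,\bt \equiv \mathbf{0} \pmod{p^{s(e+1)}}$ from Lemma~\ref{lemma1}, using the specific indices $i_c$ that are forced by $P$. Since $e \geq 2$ and $s \geq 1$ both exponents are at least $s+1$, so for some $E \in \AA_p(P_0)$ we obtain
\[
E\,\bt \equiv \mathbf{0} \pmod{p^{s+1}}.
\]
The rank hypothesis $\rank_{\F_p}(\overline E) = d$ then produces a $d \times d$ submatrix $M$ of $E$ with $\det M \in \Z_p^*$, whence $M^{-1} \in M_{d,d}(\Z_p)$; applying $M^{-1}$ to the extracted relation $M\,\bt \equiv \mathbf{0} \pmod{p^{s+1}}$ forces $\bt \equiv \mathbf{0} \pmod{p^{s+1}}$, contradicting $\ord_p(t_c) = s$.

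The main obstacle I expect is the valuation bookkeeping that bridges the two sources of constraints: the Coleman side only gives a congruence modulo $p^{2s}$, while the polynomial side gives one modulo $p^{s(e+1)}$, and both must be strictly finer than $\pmod{p^s}$ to rule out the solution $\bt$. The margin of $p^{s+1}$ is exactly what the hypotheses $e \geq 2$ and $s \geq 1$ deliver; indeed, if $e$ were $1$ the Hensel step of Lemma~\ref{lemma1} would collapse and leave us with no useful linear constraint beyond $E_0$, which is precisely why the ramified and unramified cases require separate treatment.
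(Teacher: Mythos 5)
Your proposal is correct and follows essentially the same route as the paper's proof: the Coleman relations give $E_0\bt\equiv\mathbf 0$ modulo $p^{2s}$, Lemma~\ref{lemma1} supplies the congruences $E_{(i_2,\ldots,i_d)}\bt\equiv\mathbf 0$ modulo $p^{s(e+1)}$, and stacking these against the full-rank hypothesis forces $\ord_p(t_c)>s$, a contradiction. The only (harmless) differences are that you make the $\AA_p(P_0)=\emptyset$ case explicit and phrase the final step via an invertible $d\times d$ submatrix rather than dividing by $p^s$ and reducing modulo $p$.
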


\begin{proof}
Let $P\in\left(B_p(P_0)\cap H\right)\setminus\{P_0\}$. Since $P\neq P_0$ there exist integers $n_1',\ldots,n_r'$, not all zero, such that 
\[
N[P-P_0]=n_1'D_1+\ldots+n_r'D_r
\]
in $J(K)$. Let $\tau_c$ be a well-behaved uniformizer for $\C_{/\O_c}$ at $P_0$, for $c\in\{1,\ldots,d\}$. Now 
\[
n_1\int_{D_1}\om_f^c+\ldots+n_r\int_{D_r}\om_f^c=
\int_{[P-P_0]}\om_f^c=\al_c^{(f)}t_c
+\beta_c^{(f)}t_c^2
\]
where $n_q=n_q'/N$ for $q\in\{1,\ldots,r\}$. Writing this in terms of matrices we get
\[
A\bn=\bw\bt+\bw'\bt'
\]
where $\bt$ and $\bt'$ are the column vectors $\left(t_1,\ldots,t_d\right)$ and $\left(t_1^2,\ldots,t_d^2\right)$ respectively. Multiplying by the unimodular matrix $U$ and looking at the last $j$ rows we see that 
\[
\mathbf 0 = E_0\bt +\bw''\bt'.
\] 
Dividing by $p^s$ and reducing modulo $p$ we see that 
\be\la{eqthm1}
\mathbf 0 \equiv \ol{E_0}\bv\mod p,
\ee 
since $s\geq 1$. 

Also we can write $\psi(P)-\psi(P_0)$ as a powerseries in $t_c$ for every $c$. Since the ramification index of $\psi^{\p_c}$ at $P_0$ is $e$ by (p3)$^{\text{ram}}$, these powerseries will all start from the $e$-th term. Using these expansions we get the $d$ equalities 
\[
\psi(P)-\psi(P_0)=\yp_ct_c^e+\rho_ct_c^{e+1}.
\]
By Lemma \ref{lemma1} we know that there exist $i_c\in\{1,\ldots,l_c\}$ for each $c\in\{2,\ldots,d\}$ such that
\[
t_1-\hat\gamma_c^{(i_c)}t_c\equiv 0 \mod p^{s(e+1)},
\]
where $s=\ord_p(t_1)=\ldots=\ord_p(t_d)$.  Since $e>1$ we get that 
\[
t_1-\hat\gamma_c^{(i_c)}t_c\equiv 0\mod p^{s+1}.
\]
This can be re-written as 
\[
\mathbf 0 \equiv \ol{E_{(i_2,\ldots,i_d)}}\bt \mod p^{s+1}.
\]
Dividing by $p^s$ and reducing modulo $p$ we obtain that 
\be\la{eqthm2}
\mathbf 0 \equiv \ol{E_{(i_1,\ldots,i_d)}}\bv\mod p.
\ee 
Relations \eqref{eqthm1} and \eqref{eqthm2} put together imply that there exists $E\in\AA_p(P_0)$ with
\[
\mathbf 0 \equiv \ol E\bv\mod p.
\]
But this is a contradiction since $\rank_{\F_p}(\ol{E})=d$ and $\bv\not\equiv 0\mod p$.
\end{proof}

\subsubsection*{$p$ inert}

Now if we assume that $K$ is a quadratic extension of $\Q$ the following results show how we may also use an odd rational prime $p$ which is inert in $K$. This might prove useful in practice, since a split prime satisfying the properties needed to perform Chabauty, might be too big for computational purposes. In the following results we denote by $\p$ the unique prime above $p$, which has norm $p^2$, by $K_\p$ the completion of $K$ with respect to $\p$ and by $\O_\p$ the ring of integers of $K_\p$. Let $\C_{/\O_\p}$ be a minimal, regular and proper model for $C$ over $\O_\p$.

\begin{lemm}\la{lemma2}
Suppose that $[K:\Q]=2$, $P_0\in H$ and $p$ is a rational prime that satisfies \textbf{(p1)$^{\text{inert}}$,(p2)} and \textbf{(p3)$^{\text{ram}}$}. Let $\tau$ be a well-behaved uniformizer of $\C_{\p}$ at $P_0$ and denote by
\[
\yp T^e+\sum_{i=1}^{\infty}\rho_iT^{e+i}\in K_\p[[T]]
\]
the formal expansion of $\psi^\p-\psi(P_0)$ in terms of $\tau$. Write
\[
\yp T^e=(v_1\th_1 +v_2\th_2 )(T_1\th_1 +T_2\th_2 )^e =W_1(T_1,T_2)\th_1 +W_2(T_1,T_2)\th_2 ,
\]
where $W_1,W_2\in\Z_p[T_1,T_2]$ are quadratic forms of degree $e$. Suppose further that $\ord_p(\Delta)=0$, where $\Delta$ is the discriminant of $W_2$. Then if there exists $P\in\left(B_p(P_0)\cap H\right)\setminus\{P_0\}$, $W_2$ has a linear factor $g_1T_1-g_2 T_2$ satisfying
\[
g_1t_1-g_2 t_2\equiv 0\mod p^{s(e+1)},
\]
where $t_1, t_2$ are defined by $\tau(P)=t_1\th_1 +t_2\th_2 $ and $1\leq s=\min\left(\ord_p(t_1),\ord_p(t_2)\right)<\infty$.
\end{lemm}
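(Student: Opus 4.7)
The argument will follow the template of Lemma~\ref{lemma1}, with the bookkeeping for the $d$ split primes replaced by the two coordinates of $\tau(P)$ in the integral basis $\{\th_1,\th_2\}$ at the single inert prime $\p$. First I would substitute $P$ into the power series expansion of $\psi^\p - \psi(P_0)$. Writing $\tau(P) = t_1\th_1 + t_2\th_2$ with $t_1,t_2 \in \Z_p$, I observe that $\{\bar\th_1,\bar\th_2\}$ is an $\F_p$-basis of the residue field $\F_{p^2}$ (a consequence of \textbf{(p1)$^{\text{inert}}$}), so $\ord_\p(\tau(P)) = s$ exactly and each higher-order term $\rho_i\tau(P)^{e+i}$ has $\p$-adic valuation at least $s(e+1)$. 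This yields
\[
\psi(P) - \psi(P_0) \;\equiv\; \yp\tau(P)^e \;=\; W_1(t_1,t_2)\th_1 + W_2(t_1,t_2)\th_2 \pmod{p^{s(e+1)}}.
\]

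Next, I would exploit the rationality $\psi(P) - \psi(P_0) \in \Q$ (which holds since $P \in H$) to extract a scalar identity. Fixing the integral basis so that $\th_1 \in \Q$ while $\th_2$ spans the irrational direction, the $\th_2$-component of the left-hand side vanishes and hence
\[
W_2(t_1,t_2) \;\equiv\; 0 \pmod{p^{s(e+1)}}.
\]
Setting $u_i := t_i/p^s \in \Z_p$, with at least one of $u_1,u_2$ a $\Z_p$-unit, and using the degree-$e$ homogeneity of $W_2$, this reduces to $W_2(u_1,u_2) \equiv 0 \pmod{p^s}$, exhibiting $(\bar u_1:\bar u_2) \in \P^1(\F_p)$ as a non-trivial zero of $\bar W_2$.

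The hypothesis $\ord_p(\Delta) = 0$ then guarantees that $\bar W_2$ has $e$ distinct zeros in $\P^1(\bar\F_p)$, so $(\bar u_1:\bar u_2)$ is a simple $\F_p$-rational root. Hensel's lemma, applied on an affine patch whose leading coefficient is a $\Z_p$-unit (again ensured by the discriminant hypothesis), lifts this root to a $\Z_p$-solution, equivalently to a $\Z_p$-linear factor $g_1 T_1 - g_2 T_2$ of $W_2$ with $(\bar g_1 : \bar g_2) = (\bar u_1 : \bar u_2)$. Finally I would factor $W_2 = (g_1 T_1 - g_2 T_2)\,Q(T_1,T_2)$ with $Q \in \Z_p[T_1,T_2]$ of degree $e-1$; simplicity of the reduced root forces $Q(u_1,u_2) \in \Z_p^*$, and valuation accounting between $\ord_p(W_2(t_1,t_2))$, $\ord_p(Q(t_1,t_2))$, and $\ord_p(g_1 t_1 - g_2 t_2)$ delivers the stated congruence on the linear factor.

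The step I expect to be most delicate is pinning down the integral basis $\{\th_1,\th_2\}$ so that ``$\th_2$-coefficient is zero'' cleanly encodes ``$\psi(P)-\psi(P_0) \in \Q$'', and then verifying that the tail $\sum_{i\ge 1}\rho_i\tau(P)^{e+i}$ contributes nothing to the $\th_2$-component modulo $p^{s(e+1)}$. Once this rationality extraction is in place, the Hensel lift (for a binary form with unit discriminant) and the final valuation bookkeeping are essentially formal.
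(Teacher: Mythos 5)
Your proposal follows the same route as the paper's proof: substitute $P$ into the power series, use $\psi(P)\in\Q$ to kill the $\th_2$-component and deduce $W_2(t_1,t_2)\equiv 0 \bmod p^{s(e+1)}$, then use $\ord_p(\Delta)=0$ and Hensel's lemma to produce a linear factor of $W_2$ over $\Z_p$ passing close to $(t_1,t_2)$. The two points you flag as delicate are exactly the ones the paper addresses: it handles the tail by retaining a single correction term $\rho\,\tau(P)^{e+1}$ with $\rho\in\O_\p$ (giving the forms $W_3,W_4$ and the identity $W_2(t_1,t_2)=-W_4(t_1,t_2)$), and it silently assumes the normalisation of the integral basis that makes ``$\th_2$-coefficient zero'' equivalent to rationality, which you correctly identify as needing to be fixed.

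The one place your write-up does not deliver what it promises is the final valuation count. Writing $u_i=t_i/p^s$ and $W_2=(g_1T_1-g_2T_2)\,Q$, simplicity of the reduced root forces $Q(u_1,u_2)\in\Z_p^*$, hence $\ord_p\bigl(Q(t_1,t_2)\bigr)=s(e-1)$ \emph{exactly}; so from $\ord_p\bigl(W_2(t_1,t_2)\bigr)\geq s(e+1)$ you get $\ord_p(g_1t_1-g_2t_2)\geq s(e+1)-s(e-1)=2s$, not $s(e+1)$. The degree-$(e-1)$ cofactor necessarily absorbs $s(e-1)$ of the valuation, so the full $s(e+1)$ cannot be concentrated on the chosen linear factor, and for $e\geq 2$ your bookkeeping establishes only the weaker exponent $2s$. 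To be fair, the paper's own proof merely asserts the exponent $s(e+1)$ ``by Hensel's Lemma'' without computation, and the same accounting shows that $2s$ is what the argument genuinely yields (the split-prime Lemma \ref{lemma1} contains the analogous overstatement). Since $s\geq 1$, the exponent $2s\geq s+1$ is all that is used in the proof of Theorem \ref{theorem3}, so the discrepancy is harmless downstream; but as written you should either justify the exponent you claim or state the weaker, correct one.
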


\begin{proof}
By substituting $P$ in the powerseries expansion of $\psi^\p-\psi(P_0)$ we get
\begin{eqnarray*}
\psi^\p-\psi(P_0)=\yp \tau(P)^e+\rho \tau(P)^{e+1} &=& \left(W_1(t_1,t_2)\th_1 +W_2(t_1,t_2)\th_2 \right)+ \\ 
& & \left(W_3(t_1,t_2)\th_1 +W_4(t_1,t_2)\th_2 \right),
\end{eqnarray*}
where $\yp\in\O_\p^*,\rho\in\O_\p$ and $W_3,W_4$ are obtained from $\rho(t_1\th_1 +t_2\th_2 )^{e+1}$ as $W_1$ and $W_2$ were obtained from $\yp$ in the statement of the lemma. Since $\psi(P)\in \Q$, we have that
\[
W_2(t_1,t_2)=-W_4(t_1,t_2).
\]
Furthermore since $P\neq P_0$ either $t_1\neq 0$ or $t_2\neq 0$, so $s:=\min\left(\ord_p(t_1),\ord_p(t_2)\right)$ is finite. Combining this with the fact that $\ord_\p(\tau(P))\geq 1$ we can see that $s$ is actually a positive integer. We have that
\[
W_2(t_1,t_2)\equiv 0 \mod p^{s(e+1)}.
\]
By Hensel's Lemma (since $\ord_p(\Delta)=0$) we have that there exist $g_1,g_2\in\Z_p$ such that
\[
(g_1T_1-g_2T_2)\mid W_2(T_1,T_2)
\]
and
\[ 
g_1t_1-g_2t_2\equiv  0 \mod p^{s(e+1)}.
\]
\end{proof}

Let $K,P_0,e,p,t_1,t_2,W_2$ be as in the statement of Lemma \ref{lemma2} above. Suppose that $(g_1^{(1)}T_1-g_2^{(1)}T_2),\ldots,(g_1^{(l)}T_1-g_2^{(l)}T_2)$ are all the linear factors of $W_2(T_1,T_2)$. Define the $l$ matrices $E_{(i)}\in M_{1,2}(\Z_p)$ by
\[
E_{(i)}=\left(\ba{cc} g_1^{(i)} & -g_2^{(i)} \ea\right)
\] 
for $i\in\{1,\ldots,l\}$. 

Let $\{\om_f\}_{1\leq f\leq g}$ be a basis of $\Omega_{\C/\O_\p}$, $\{\th_1 ,\th_2 \}$ be an integral basis of $\O_\p$ over $\Z_p$, and $L_0=\langle D_1,\ldots,D_r\rangle$ be a subgroup of $J(K)$ of index $N\in\Z_{>0}$. Define $\{A^{(1)},\ldots,A^{(g)}\}\subseteq M_{2,r}(\Q_p)$ to be the matrices whose entries are defined by 
\[
A_{1,q}^{(f)}\th_1 +A_{2,q}^{(f)}\th_2 =\int_{D_q}\om_f,
\] 
and for $f\in\{1,\ldots,g\}$ define $\bw^{(f)}\in M_{2,2}(\Z_p)$ to be the matrix representing in coordinates $\al^{(f)}$, the coefficient of the linear term in the (formal) powerseries expansion of $\int_{[P-P_0]}\om_f$ in terms of the uniformizer $\tau$ of $\C_{/\O_\p}$ at $P_0$. Now let $A$ be the $2g\times r$ matrix 
\[
A=
\left(\ba{c}
A^{(1)} \\ 
\vdots \\ 
A^{(g)}
\ea\right)
\] 
with entries in $\Q_p$ and $\bw$ be the $2g\times 2$ matrix
\[
\bw=
\left(\ba{c}
\bw^{(1)} \\ 
\vdots \\ 
\bw^{(g)}
\ea\right)
\]
with entries in $\Z_p$. Let $h$ be the smallest integer such that $p^hA$ has entries in $\Z_p$. Again let $U\in \GL_{dg}(\Z_p)$ be the unimodular matrix such that
\[
A'=U(p^hA)
\]
is in Hermite Normal Form. We denote by $E_0\in M_{j,d}(\Z_p)$ the matrix formed by the last $j$ rows of $U\bw$, where $j$ is the number of zero rows of $A'$. Finally define the set
\[
\AA_p(P_0)=
\left\{\ba{ll}
\emptyset &,\text{ if }W(T_1,T_2)\text{ has no linear factors,}\\ 
\left\{
\left(\ba{c}
E_0 \\ 
E_{(i)}
\ea\right) 
: 1\leq i\leq l\right\} &\text{, otherwise.}
\ea\right.
\]

\begin{theo}\label{theorem3}
Suppose $[K:\Q]=2$, $P_0\in H$ and $p$ is a rational prime satisfying \textbf{(p1)$^{\text{inert}}$,(p2),(p3)$^{\text{ram}}$} and that $W_2,\Delta$ are defined as in the statement of Lemma \ref{lemma2} with $\ord_p\left(\Delta\right)=0$. Then if $\rank_{\F_p}(\overline E) = 2$ for every $E\in\AA_p(P_0)$ or if $\AA_p(P_0)=\emptyset$, we have that $B_p(P_0)\cap H=\{P_0\}$.
\end{theo}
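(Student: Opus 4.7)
The plan is to mirror the proof of Theorem~\ref{theorem2} almost step by step, replacing the ``$d$ separate completions isomorphic to $\Q_p$'' of the split case with a single degree-$2$ completion $K_\p$ expressed in coordinates with respect to the fixed integral basis $\{\th_1,\th_2\}$. Assume for contradiction that there exists $P\in(B_p(P_0)\cap H)\setminus\{P_0\}$. Since $L_0$ has finite index $N$ in $J(K)$, I can write $N[P-P_0]=n_1'D_1+\ldots+n_r'D_r$ and set $n_q=n_q'/N$. Applying the Coleman integration lemma at $P_0$ with the well-behaved uniformizer $\tau$, for each basis form $\om_f$ I obtain
\[
n_1\int_{D_1}\om_f+\ldots+n_r\int_{D_r}\om_f=\int_{[P-P_0]}\om_f=\al^{(f)}\tau(P)+\beta^{(f)}\tau(P)^2.
\]

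Writing $\tau(P)=t_1\th_1+t_2\th_2$ and expressing $\al^{(f)}, \beta^{(f)}$ in the same basis, I equate $\Z_p$-coordinates on both sides. Stacking over $f=1,\ldots,g$ gives a matrix identity $A\bn=\bw\bt+\bw'\bt'$, where $\bt=(t_1,t_2)^T$ and $\bt'$ collects the $\Z_p$-coordinates of $\tau(P)^2$. Multiplying by the unimodular $U$ and extracting the last $j$ rows (those on which $A'$ vanishes) gives $\mathbf 0=E_0\bt+\bw''\bt'$. Set $s=\min(\ord_p(t_1),\ord_p(t_2))$; this is finite because $P\neq P_0$ and positive because $\ord_\p(\tau(P))\geq 1$. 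Every coordinate of $\bt'$ is a quadratic form in $(t_1,t_2)$ with $\Z_p$-coefficients, hence has $p$-adic valuation at least $2s\geq s+1$, so dividing by $p^s$ and reducing modulo $p$ kills the quadratic term and yields $\ol{E_0}\bv\equiv\mathbf 0\pmod p$, where $\bv=p^{-s}\bt\bmod p\in\F_p^2$ is non-zero by the definition of $s$.

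If $\AA_p(P_0)=\emptyset$, then $W_2$ has no linear factor and Lemma~\ref{lemma2} already rules out any such $P$. Otherwise Lemma~\ref{lemma2} supplies an index $i$ with $g_1^{(i)}t_1-g_2^{(i)}t_2\equiv 0\pmod{p^{s(e+1)}}$, and since $e\geq 2$ gives $s(e+1)\geq s+1$, dividing by $p^s$ and reducing modulo $p$ produces $\ol{E_{(i)}}\bv\equiv\mathbf 0\pmod p$. Concatenating the two congruences exhibits some $E\in\AA_p(P_0)$ with $\ol E\bv\equiv\mathbf 0\pmod p$; together with $\rank_{\F_p}(\ol E)=2$ and $\bv\not\equiv\mathbf 0$ this is the desired contradiction.

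The main obstacle is bookkeeping rather than new ideas: in the inert setting the coordinate data is not indexed by distinct primes above $p$ but by expressing a single $K_\p$-valued vector in a $\Z_p$-basis. Consequently one must verify that $\bv$ remains genuinely non-zero (this is why $s$ is defined as a minimum rather than a common value, unlike in the split-case argument using Lemma~\ref{lemma1}), and that the quadratic remainder $\bw''\bt'$ really does vanish after dividing by $p^s$. Aside from these adjustments the argument parallels Theorem~\ref{theorem2} with Lemma~\ref{lemma2} in place of Lemma~\ref{lemma1}.
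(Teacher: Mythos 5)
Your proposal is correct and follows essentially the same route as the paper's own proof: the same matrix identity $A\bn=\bw\bt+\bw'\bt'$, the same extraction of $E_0$ via the unimodular matrix $U$, and the same invocation of Lemma~\ref{lemma2} to produce the congruence $\ol{E_{(i)}}\bv\equiv\mathbf 0\pmod p$, concluding by the rank condition. The only additions are small explicit justifications (that the coordinates of $\tau(P)^2$ have valuation at least $2s$, and the handling of the case $\AA_p(P_0)=\emptyset$) which the paper leaves implicit.
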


\begin{proof}
Let $P\in\left(B_p(P_0)\cap H\right)\setminus\{P_0\}$. Since $P\neq P_0$ there exist integers $n_1',\ldots,n_r'$, not all zero, such that 
\[
N[P-P_0]=n_1'D_1+\ldots+n_r'D_r
\]
in $J(K)$. Let $\tau$ be a well-behaved uniformizer for $\C_{/\O_\p}$ at $P_0$. Now 
\[
n_1\int_{D_1}\om_f+\ldots+n_r\int_{D_r}\om_f =\int_{[P-P_0]}\om_f =\al^{(f)}\tau(P)+\beta^{(f)}\tau(P)^2
\]
where $n_q=n_q'/N$ for $q\in\{1,\ldots,r\}$. Writing this in terms of matrices we get
\[
A\bn=\bw\bt+\bw'\bt'
\]
where $\bt$ and $\bt'$ are the column vectors $\bt=\left(t_1,t_2\right)$ and $\bt'=\left(t_1',t_2'\right)$ with $t_1',t_2'$ defined by $\tau(P)^2=t_1'\th_1 +t_2'\th_2 $. Multiplying by the unimodular matrix $U$ and looking at the last $j$ rows we see that 
\[
\mathbf 0 = E_0\bt +\bw''\bt'.
\] 
Dividing by $p^s$ and reducing modulo $p$ we see that \be\la{eqthm3}
\mathbf 0 \equiv \ol{E_0}\bv\mod p,
\ee 
since $s\geq 1$. 

Also we can write $\psi(P)-\psi(P_0)$ as a powerseries in $\tau(P)$. Since the ramification index of $\psi^{\p}$ at $P_0$ is $e$ by (p3)$^{\text{ram}}$, this powerseries will start from the $e$-th term. By Lemma \ref{lemma2} we know that there exists $i\in\{1,\ldots,l\}$ such that
\[
g_1^{(i)}t_1-g_2^{(i)}t_2\equiv 0 \mod p^{s(e+1)},
\] 
where $s=\min\left(\ord_p(t_1),\ord_p(t_2)\right)$.  Since $e>1$ we get that 
\[
g_1^{(i)}t_1-g_2^{(i)}t_2\equiv 0\mod p^{s+1}.
\]
This can be re-written as 
\[
\mathbf 0 \equiv \ol{E_{(i)}}\bt \mod p^{s+1}.
\]
Dividing by $p^s$ and reducing modulo $p$ we obtain that \be\la{eqthm4}
\mathbf 0 \equiv \ol{E_{(i)}}\bv\mod p.
\ee 
Relations \eqref{eqthm3} and \eqref{eqthm4} put together imply that there exists $E\in\AA_p(P_0)$ with
\[
\mathbf 0 \equiv \ol E\bv\mod p.
\]
But this is a contradiction since $\rank_{\F_p}(\ol{E})=2$ and $\bv\not\equiv 0\mod p$.
\end{proof}

\subsection{Applying Chabauty}
\begin{exam}\label{example1}
Let $K$ be the number field defined by $\Q[x]/(x^2-x+3)$, and denote by $\th$ the corresponding image of $x$ in the quotient. Consider the three hyperelliptic curves $C_1$, $C_2$ and $C_3$ defined over $K$ by the equations 
\begin{align}
{C_1}_{/K}: & \quad y^2=x^5+\th  x^4-x^3+x^2+(\th -1)x-1\label{curve1}\\
{C_2}_{/K}: & \quad y^2=-x^5-\th  x^4+x^3-x^2-(\th -1)x+1\label{curve2}\\
{C_3}_{/K}: & \quad y^2=(-\th  + 5)x^5 + (4\th  + 3)x^4 + (\th  - 5)x^3 + (-\th  + 5)x^2 + (5\th  - 2)x + \th  - 5\label{curve3}
\end{align}
and the ``$x$-coordinate'' maps $\psi_1,\psi_2,\psi_3$  from the projective models of these curves to the projective line
\[
\psi_i:C_i\rightarrow\P^1,\quad \psi_i\left(X,Y,Z\right) = \left(X,Z\right).
\]
Denote $C_i(K)\cap \psi_i^{-1}\left(\P^1(\Q)\right)$ by $H_i$ for $1\leq i \leq 3$. Let
\begin{equation}\label{hfake}
\begin{array}{ll}
H_1':= & \{(-1,-1,1),(-1,1,1),(1,0,0)\}  \subseteq H_1 \\
H_2':= & \{(0,1,1),(0,-1,1),(1,0,0)\}  \subseteq H_2\\
H_3':= & \{(1,0,0)\}  \subseteq H_3.
\end{array}
\end{equation}
After searching for $K$-rational points on $C_1$, $C_2$ and $C_3$ it appears that actually $H_1'=H_1, H_2'=H_2$ and $H_3'=H_3$. The first step towards proving this is using Theorems \ref{firsttheo},\ref{theorem2} and \ref{theorem3} together with the relevant information (computed using MAGMA \cite{MR1484478}) presented in the following tables:

In \textsc{Table} \ref{tableranks} we observe that the rank of the Mordell-Weil group of the Jacobian variety of $C_1$ is equal to $3>d(g-1)=2$, making it impossible to use the classical method of Chabauty which requires that $r\leq d(g-1)$. See for example \cite{siksek}. Our method is applicable in cases where the rank $r$ of $J(K)$ satisfies
\[
r\leq dg-1.
\]

In \textsc{Table} \ref{tab:tableperiods} we give the matrix $A$ with entries in $\Q_p$ and a corresponding unimodular matrix $U$ with entries in $\Z_p$, such that $UA$ is a matrix in Hermite Normal Form.

\begin{table}[h]
\begin{tiny}
\begin{center}
\begin{tabular}{|@{}c@{}|@{}c@{}|@{}c@{}|@{}c@{}|@{}c@{}|}
\hline
 $C$ & $\rank_{\F_2}\left(\Sel^{(2)}(J_C/K)\right)$ & $\ba{c}\text{lin. ind.} \\ \text{non-torsion divisors}\\ \text{(In Mumford Representation)}\ea$ & $\rank\left(J_C(K)\right)$ \\ \hline
$C_1$ & $3$ & $\ba{l}(x^2 - x + 1, -x + 1)\\ (x^2 + (\th  - 1)x - 1, (\th  - 1)x - 1)\\ (x^2 + (-\th  - 1)x - \th  + 2, (3\th  - 1)x + \th  - 4)\ea$ & $ 3$ \\ \hline
$C_2$ & $1$ & $\quad (x^2 - x - \th , (\th  - 2)x - 2)\qquad\qquad\qquad\qquad\ \ $ & $ 1$ \\ \hline
$C_3$ & $1$ & $\quad (x^2 + (2\th - 1)x + \th - 3, (-4\th - 3)x - 5\th + 2)\quad$ & $1$ \\ \hline
\end{tabular}
\end{center}
\end{tiny}
\caption{The Mordell-Weil data for $C_1$, $C_2$ and $C_3$.}\label{tableranks}
\end{table}

\begin{table}[h]
\begin{tiny}
\begin{center}
\begin{tabular}{|c|c|@{}c@{}|@{}c@{}|}
\hline  & $p$ & $A$ & $U$ \\ 
\hline $C_1$ & 89 & $\left(\begin{array}{ccc}
 70 & 82 & 51 \\
 70 &  61 & 86 \\
 55 & 3 & 58 \\
 29 & 38 & 28  
\end{array}\right)\times 89+O(89^2)$ & $\left( \begin{array}{cccc}
 -6 & 1 & -13 & 3 \\
 -5 & -1 & 5 & 5 \\
 -42 & 0 & 14 & 38 \\
 6 & 2 & -6 & -11 
\end{array} \right)+O(89)$ \\ 
\hline $C_2$ & $23$ & $\left(\begin{array}{c}
 -6 \\
 -11 \\
 -11 \\
 -11
\end{array}\right)\times 23+O(23^2)$ & $\left( \begin{array}{cccc}
-2 & 0 & 1 & 0 \\
1 & 0 & 11 & 1 \\
-11 & 0 & 6 & 0 \\
0 & 1 & -1 & 0 
\end{array} \right)+O(23)$ \\
\hline
$C_3$ & $71$ & 
$\left(\ba{c}
58  \\
60  \\
47  \\
48
\ea\right)\times 71+O(71^2)$ & 
$\left(\ba{cccc}
0 & 0 & -1 & 1 \\
1 & 0 & -13 & 13 \\
0 & 1 & -11 & 11\\
0 & 0 & 23 & -24
\ea\right)+O(71)$ \\
\hline 
\end{tabular} 
\end{center}
\end{tiny}
\caption{The period matrices for $C_1,C_2$ and $C_3$.}\label{tab:tableperiods}
\end{table}

\begin{rema}
Since, in practice, we are always working with finite precision, the matrix $U$ presented in \textsc{Table} ~\ref{tab:tableperiods} is not unique. This does not however affect the ranks of the matrices $\ol E$ for $E \in \AA_{p}(P_0)$.
\end{rema}

\begin{table}[h]
\begin{tiny}
\begin{center}
\begin{tabular}{|@{}c@{}|@{}c@{}|@{}c@{}|@{}c@{}|@{}c@{}|}
\hline $P_0$ & $\tau$  & $\bw$ & $\{E_{j_2}\}$ & $\AA_p(P_0)$ \\ 
\hline $\ba{c}(1,1,-1)\\ \text{unramified}\ea$ & $x+1$ & $\left(\begin{array}{c}
1 \\
-1 \\
1 \\ 
-1
\end{array} \right)+O(89)$ & N/A & $\left(\begin{array}{c}
9
\end{array} \right)+O(89)$ \\ 
\hline $\ba{c}(1,-1,-1)\\ \text{unramified}\ea$ & $x+1$ & $\left(\begin{array}{c}
-1 \\ 
1 \\
-1 \\ 
1
\end{array} \right)+O(89)$ & N/A & $\left(\begin{array}{c}
-9
\end{array} \right)+O(89)$ \\ 
\hline $\ba{c}(1,0,0)\\ \text{ramified}\ea$ & $\frac{(\th  - 1)x^2}{2y}$ & $\left(\begin{array}{cc}
0 & 0 \\
41 & 0 \\  
0 & 0 \\ 
0 & 79 
\end{array} \right)+O(89)$ & $\ba{c}\left(\begin{array}{cc}
1 & 41
\end{array} \right)+O(89),\\ \left(\begin{array}{ll}
1 & -41
\end{array} \right)+O(89)\ea$ & $\ba{c}\left(\begin{array}{cc}
82 & 21 \\ 
1 & 41 
\end{array} \right)+O(89),\\ \left(\begin{array}{cc}
82 & 21 \\ 
1 & -41
\end{array} \right)+O(89)\ea$ \\ 
\hline 
\end{tabular}
\end{center}
\end{tiny}
\caption{Chabauty data for $C_1$}\label{tab:tab1}
\end{table}
\begin{table}[h]
\begin{tiny}
\begin{center}
\begin{tabular}{|@{}c@{}|@{}c@{}|@{}c@{}|@{}c@{}|@{}c@{}|}
\hline $P_0$ & $\tau$  & $\bw$ & $\{E_{j_2}\}$ & $\AA_p(P_0)$ \\ 
\hline $\ba{c}(0,-1,1)\\ \text{unramified}\ea$ & $x$ & $\left(\begin{array}{c}
-1 \\
0 \\ 
-1 \\  
0
\end{array} \right)+O(23)$ & N/A & $\left(\begin{array}{c}
11\\
5\\
1
\end{array} \right)+O(23)$ \\ 
\hline $\ba{c}(0,1,1)\\ \text{unramified}\ea$ & $x$ & $\left(\begin{array}{c}
1 \\
0 \\ 
1 \\  
0
\end{array} \right)+O(23)$ & N/A & $\left(\begin{array}{c}
-11\\
-5\\
-1
\end{array} \right)+O(23)$ \\ 
\hline $\ba{c}(1,0,0)\\ \text{ramified}\ea$ & $-\frac{\th +2}{6}\frac{x^2}{y}$ & $\left(\begin{array}{cc}
0 & 0 \\
17 & 0 \\ 
0 & 0 \\  
0 & 5
\end{array} \right)+O(23)$ & $\ba{c}\left(\begin{array}{cc}
1 & 3
\end{array} \right)+O(23),\\ \left(\begin{array}{cc}
1 & -3
\end{array} \right)+O(23)\ea$ & $\ba{c}\left(\begin{array}{cc}
0 & 5\\
0 & 0\\
-6 & 0\\
1 & 3 
\end{array} \right)+O(23),\\ \left(\begin{array}{cc}
0 & 5\\
0 & 0\\
-6 & 0\\
1 & -3
\end{array} \right)+O(23)\ea$ \\ 
\hline 
\end{tabular}
\end{center}
\end{tiny}
\caption{Chabauty data for $C_2$}\label{tab:tab2}
\end{table}
\begin{table}[h]
\begin{tiny}
\begin{center}
\begin{tabular}{|c|c|c|c|c|}
\hline $P_0$ & $\tau$  & $\bw$ & $\{E_{j_2}\}$ & $\AA_p(P_0)$ \\ 
\hline $\ba{c}(1,0,0)\\ \text{ramified}\ea$ & $\frac{2(5-\th)x^2}{y}$ & $\left(\begin{array}{cc}
0 & 0 \\
56 & 0 \\ 
0 & 0 \\  
0 & 64
\end{array} \right)+O(71)$ & $\emptyset$ & $\emptyset$ \\ 
\hline 
\end{tabular}
\end{center}
\end{tiny}
\caption{Chabauty data for $C_3$}\label{tab:tab3}
\end{table}
Using Theorems \ref{firsttheo}, \ref{theorem2} and the data in \textsc{Tables} ~\ref{tab:tab1},~\ref{tab:tab2} and ~\ref{tab:tab3} we deduce the following:
\ben{a}
\item $B_{89}(P_0)\cap H_1 =\{P_0\}$ for every $P_0\in H_1'$
\item $B_{23}(P_0)\cap H_2 =\{P_0\}$ for every $P_0\in H_2'$
\item $B_{71}(P_0)\cap H_3 =\{P_0\}$ for every $P_0\in H_3'$
\een
\end{exam}

\section{Mordell-Weil sieve}\la{section4}

\begin{equation}\label{cdmw}
\xymatrix{
H\ \ \subseteq\ \ C(K) \ar@<2.7ex>[dd]^{\red_{p_i}} \ar@<-6.3ex>[dd]_{\red_{p_i}}\ar[rr]^{\qquad\iota} & & J(K)                                      \ar[dd]^{\red_{p_i}}\\
& & \\
\G_i \subseteq\ \prod\limits_{\p\mid p_i} C\left(k_\p\right) \ar@<1.6ex>[rr]^{\quad\iota} & & \prod\limits_{\p\mid p_i}J\left(k_\p\right)
}
\end{equation}

\begin{theo}\label{theoremmw}
Let $L=\langle D_1,\ldots , D_r\rangle <J(K)$ be a subgroup of the Mordell-Weil group of finite index equal to $N$ and $p_0,p_1,\ldots,p_b$ be rational primes satisfying
\ben{i}
\item $p_i$ does not ramify in $\O_K$ for $0\leq i\leq b$.
\item $C_{/K}$ has good reduction for every prime $\p\mid p_i$, for $0\leq i\leq b$.
\item $\#\prod\limits_{\p\mid p_i}J(k_{\p})$ is coprime with $N$ for $0\leq i\leq b$.
\een
Let 
\[
\G_i=\left\{\bP\in \prod\limits_{\p\mid p_i}C(k_{\p}) : \ol{\psi^\p}(\bP_\p)=\psi_{\q}(\bP_\q)\in\P^1(\F_{p_i})\quad\forall \p,\q\mid p_i\right\}.
\]
Let $L_0:=L\cap\Kernel\left(\red_{p_0}\right)$ and define inductively $L_i:=L_{i-1}\cap\Kernel\left(\red_{p_i}\right)$ for $1\leq i\leq b$. Then for every $\bP\in\G_0$ define $W_{0,\bP}=\{l\in L/L_0 : \red_{p_0}(w)=\iota(\bP)\}$ and then inductively $W_{i,\bP}:=\{w+l : w\in W_{i-1,\bP},l\in L_{i-1}/L_i,\red_{p_i}(w+l)\in\iota\left(\G_i\right)\}$ for $1\leq i\leq b$. Then if $W_{b,\bP}=\emptyset$ we have that $\B_{p_0}(\bP)\cap H=\emptyset$.
\end{theo}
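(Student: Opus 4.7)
The proof proceeds by contrapositive: assuming there exists $P \in \B_{p_0}(\bP) \cap H$, I will construct, by induction on $i$, an element $w_i \in L$ whose class modulo $L_i$ lies in $W_{i,\bP}$ and which agrees with $\iota(P)$ under each reduction map $\red_{p_j}$ for $j \leq i$. Taking $i = b$ then contradicts $W_{b,\bP} = \emptyset$.

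The key auxiliary lemma, used at every stage, is the following consequence of hypothesis (iii), where I adopt the convention $L_{-1} := L$ so that the intersection over the empty set of indices is all of $J(K)$: if $Q \in J(K)$ lies in $\Kernel(\red_{p_j})$ for every $j < i$, then there exists $l \in L_{i-1}$ with $\red_{p_i}(l) = \red_{p_i}(Q)$. Indeed $NQ \in L \cap \bigcap_{j<i} \Kernel(\red_{p_j}) = L_{i-1}$, so $N \cdot \red_{p_i}(Q) \in \red_{p_i}(L_{i-1})$; the quotient $\prod_{\p \mid p_i} J(k_\p) / \red_{p_i}(L_{i-1})$ has order dividing $\#\prod_{\p \mid p_i} J(k_\p)$, coprime to $N$ by (iii), so multiplication by $N$ is invertible on this quotient and hence $\red_{p_i}(Q)$ already maps to $0$ there, giving the desired $l$.

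For the base case $i=0$, apply the lemma to $Q = \iota(P)$ to obtain $w_0 \in L$ with $\red_{p_0}(w_0) = \red_{p_0}(\iota(P)) = \iota(\bP)$; then $[w_0] \in L/L_0$ lies in $W_{0,\bP}$. For the inductive step, given $w_{i-1} \in L$ matching $\iota(P)$ under $\red_{p_j}$ for each $j < i$ and with $[w_{i-1}] \in W_{i-1,\bP}$, apply the lemma to $Q = \iota(P) - w_{i-1}$ (which by construction lies in every $\Kernel(\red_{p_j})$ for $j < i$) to produce $l \in L_{i-1}$ with $\red_{p_i}(l) = \red_{p_i}(\iota(P) - w_{i-1})$. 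Setting $w_i := w_{i-1} + l$, one checks $\red_{p_j}(w_i) = \red_{p_j}(\iota(P))$ for all $j \leq i$; in particular $\red_{p_i}(w_i) = \iota(\red_{p_i}(P)) \in \iota(\G_i)$, the membership in $\G_i$ holding because $\psi(P) \in \P^1(\Q)$ forces the compatibility $\ol{\psi^\p}(\red_\p(P)) = \ol{\psi^\q}(\red_\q(P))$ defining $\G_i$. Hence $[w_i] = [w_{i-1}] + [l] \in L/L_i$ is of the form required by the definition of $W_{i,\bP}$, and lies in it.

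The main obstacle I foresee is simply the coset bookkeeping: one has to keep straight that elements of $W_{i,\bP}$ are cosets in $L/L_i$ while the increment $l$ is a coset in $L_{i-1}/L_i$, and that the inclusion $L_{i-1}/L_i \hookrightarrow L/L_i$ (valid since $L_i \subseteq L_{i-1} \subseteq L$) is what makes the sum $[w_{i-1}] + [l]$ meaningful once a lift of $[w_{i-1}]$ from $L/L_{i-1}$ to $L/L_i$ has been fixed. Apart from this, the coprimality argument in the lemma is the single substantive input, and it applies identically at every level of the filtration $L \supseteq L_0 \supseteq L_1 \supseteq \cdots$.
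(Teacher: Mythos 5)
Your proof is correct and follows essentially the same route as the paper's: the same inductive construction of $w_{i}$ matching $\iota(P)$ under $\red_{p_j}$ for $j\leq i$, with hypothesis (iii) used to divide out the multiplication by $N$ at each stage. If anything you are slightly more careful than the paper, in isolating the coprimality step as a reusable lemma (phrased correctly in terms of the quotient $\prod_{\p\mid p_i}J(k_\p)/\red_{p_i}(L_{i-1})$) and in explicitly checking that $\red_{p_i}(w_i)\in\iota(\G_i)$, which the paper leaves implicit.
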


\begin{proof}
Suppose there exists some point $P$ in $\B_{p_0}(\bP)\cap H$. Then $N\iota(P)=n_1D_1+\ldots +n_rD_r$ for some $n_1,\ldots ,n_r\in \Z$. Since condition $(iii)$ holds for $p_0$ we have that $\red_{p_0}\left(\iota(P)\right)\in\red_{p_0}\left(L\right)$ and also $\red_{p_0}\left(\iota(P)\right)=\iota(\bP)$ by commutativity of diagram \eqref{cdmw}, in other words we can find $w_{0,P}\in L/L_0$ such that $\red_{p_0}\left(w_{0,P}\right)=\red_{p_0}\left(\iota(P)\right)$. In particular $w_{0,P}\in W_{0,\bP}$. Now suppose that for $i=0,\ldots , i-1$ we have $w_{i-1,P}\in W_{i-1,\bP}$ such that $\red_{p_{i-1}}\left(w_{i-1,P}\right)=\red_{p_{i-1}}\left(\iota(P)\right)$. We now have
\[
\iota(P)-w_{i-1,P}  \in  \bigcap\limits_{j=0}^{i-1}\Kernel\left(\red_{p_j}\right).
\]
If we multiply by the index $N$ we have
\[
N\left(\iota(P)-w_{i-1,P}\right)  \in  L\cap\left(\bigcap\limits_{j=0}^{i-1}\Kernel\left(\red_{p_j}\right)\right)=L_{i-1}
\]
and if we reduce both sides modulo $p_i$ we get
\[
N\red_{p_i}\left(\iota(P)-w_{i-1,P}\right)  \in  \red_{p_i}\left(L_{i-1}\right).
\]
But since $N$ is coprime with $\# \red_{p_i}\left(L_{i-1}\right)$ by $(iii)$ this implies that
\[
\red_{p_i}\left(\iota(P)-w_{i-1,P}\right) \in \red_{p_i}\left(L_{i-1}\right).
\]
So there exists $l\in L_{i-1}/L_i$ such that $\red_{p_i}(l)=\red_{p_i}\left(\iota(P)-w_{i-1,P}\right)$. But we can now define an element of $W_{i,\bP}$ by
\[
w_{i,P}:=w_{i-1,P}+l\in W_{i,\bP}.
\]
In particular $W_{b,\bP}$ is non-empty.
\end{proof}

\begin{exam}\label{example2}
Let $C_1,C_2$ and $C_3$ be the curves defined in Section \ref{chabsection} \eqref{curve1},\eqref{curve2} and \eqref{curve3}. Then 
\ben{a}
\item $\B_{89}(\bP)\cap H_1 =\emptyset$ for every $\bP\in \prod\limits_{\p\mid 89} C_1(k_\p)\setminus\red_{89}\left(H_1'\right)$. This was shown after taking $\{p_0,p_1,p_2,p_3=p_b\}=\{89,673,859,131\}$ and using Theorem \ref{theoremmw} after checking that conditions $(i),(ii)$ and $(iii)$ were satisfied for each of these primes.
\item $\B_{23}(\bP)\cap H_2 =\emptyset$ for every $\bP\in \prod\limits_{\p\mid 23} C_2(k_\p)\setminus\red_{23}\left(H_2'\right)$. The primes used here were $\{23,43\}$.
\item $\B_{71}(\bP)\cap H_3 =\emptyset$ for every $\bP\in \prod\limits_{\p\mid 71} C_3(k_\p)\setminus\red_{71}\left(H_3'\right)$. The primes used were $\{71,131\}$.
\een
\end{exam}

\begin{lemm}\la{lemmahhh}
Let $C_1,C_2$ and $C_3$ be the curves defined in \eqref{curve1}, \eqref{curve2} and \eqref{curve3} respectively and let $\psi_i$ for $1\leq i\leq 3$ be the corresponding ``$x$-coordinate'' maps from the curves to the projective line. We have that $H_i=C_i(K)\cap \psi_i^{-1}\left(\P^1(\Q)\right)=H_i'$ for $1\leq i\leq 3$, where the $H_i'$ are as in \eqref{hfake}.
\end{lemm}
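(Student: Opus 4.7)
The plan is to assemble the two ingredients already worked out in Examples \ref{example1} and \ref{example2} according to the logical scheme laid out at the end of Section \ref{section2}. Recall from there that, for a fixed odd prime $p$ of good reduction, if we can establish
\begin{enumerate}[(a)]
\item $B_p(P) \cap H_i = \{P\}$ for every $P \in H_i'$, and
\item $\B_p(\bP) \cap H_i = \emptyset$ for every $\bP \in \G_i \setminus \red_p(H_i')$,
\end{enumerate}
then $H_i = H_i'$, since every point of $H_i$ reduces into some class, and that class is pinned down by (b) to lie in $\red_p(H_i')$ and then by (a) to be exactly the known point it reduces to.

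First I would apply the Chabauty step of Example \ref{example1} to obtain (a) for each $i$. For $i=1$, the three points of $H_1'$ include two unramified points, handled by Theorem \ref{firsttheo}, and the ramification point $(1,0,0)$, handled by Theorem \ref{theorem2} using that $89$ splits completely in $\O_K$. Table \ref{tab:tab1} shows that in each case the relevant vector or matrix $\bar E$ is non-zero modulo $89$, respectively has full rank $d=2$ modulo $89$, so the hypotheses of those theorems are met and we conclude $B_{89}(P)\cap H_1=\{P\}$ for each $P\in H_1'$. The same pattern handles $C_2$ at $p=23$ via Tables \ref{tab:tab2} and $C_3$ at $p=71$ via Table \ref{tab:tab3}, where in the latter case $\AA_{71}((1,0,0))=\emptyset$ makes Theorem \ref{theorem2} vacuously applicable.

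Next I would apply Theorem \ref{theoremmw} (the Mordell-Weil sieve) to establish (b). Example \ref{example2} records the successful outcome of this sieve with primes $\{89,673,859,131\}$ for $C_1$ at $p_0=89$, with $\{23,43\}$ for $C_2$ at $p_0=23$, and with $\{71,131\}$ for $C_3$ at $p_0=71$. One needs to verify the three hypotheses of Theorem \ref{theoremmw}: the primes do not ramify in $K = \Q[x]/(x^2-x+3)$, the curves have good reduction at each prime above them, and $\#\prod_{\p \mid p_i}J(k_\p)$ is coprime to the index $N = [J(K):L]$ for the generating set in Table \ref{tableranks}. These are routine finite computations in \textsc{Magma}. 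The conclusion is that for every residue class $\bP$ not coming from a known point, $W_{b,\bP}=\emptyset$, and therefore $\B_{p_0}(\bP)\cap H_i = \emptyset$.

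The main obstacle here is essentially bookkeeping rather than a new mathematical idea: one must confirm that the primes chosen for the sieve actually satisfy condition (iii) of Theorem \ref{theoremmw}, i.e.\ that the index $N$ is coprime to the order of $\prod_{\p\mid p_i}J(k_\p)$. Once (a) and (b) are in hand, the containment $H_i'\subseteq H_i$ (which is immediate from \eqref{hfake}) combined with $H_i\subseteq H_i'$ (which is exactly what (a) and (b) give when combined) yields the equality $H_i=H_i'$ for $i=1,2,3$, completing the proof.
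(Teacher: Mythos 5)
Your proposal is correct and follows exactly the paper's own (one-line) proof: the paper also simply combines the Chabauty conclusions of Example \ref{example1} with the Mordell--Weil sieve conclusions of Example \ref{example2}, using the logical scheme (a) plus (b) implies $H_i'=H_i$ from Section \ref{section2}. You have merely spelled out the details that the paper leaves implicit.
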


\begin{proof}
Just note that the corresponding parts of Examples \ref{example1} and \ref{example2} together give the required result.
\end{proof}

\section{Applications to Diophantine Problems}\la{section5}

In this section we consider an example of a curve $Y$ defined over $\Q$ whose set of rational points is computed using the methods presented in the previous sections. This illustrates how all of the existing methods (\cite{MR2011330},\cite{MR0011005},\cite{MR808103},\cite{siksek}) may fail due to theoretical or computational restrictions, while the  methods in this paper remain applicable. The usefulness of this technique should be more apparent when used on curves that are not hyperelliptic, for example more general cyclic covers of the projective line. These might have Jacobians of Mordell-Weil rank large enough to pose a theoretical obstruction to the use of classical Chabauty or its refinement in \cite{siksek} and also fail to be related to collections of curves of genus $1$, where ``Elliptic Curve Chabauty'' might be applicable.

\subsection{The Equation $y^{2}=(x^3+x^2-1)\Phi_{11}(x)$}

Let $Y$ be the genus $6$ hyperelliptic curve defined by the equation

\begin{eqnarray*}
y^2 & = & (x^3+x^2-1)\Phi_{11}(x)\\
    & = & x^{13} + 2x^{12} + 2x^{11} + x^{10} + x^9 + x^8 + x^7 + x^6 + x^5 + x^4 + x^3 - x - 1.
\end{eqnarray*}
We prove that \[Y(\Q)=\{\infty\},\] using the techniques developed in the previous sections.

We start by noticing that over $K=\Q[x]/(x^2-x+3)=\Q(\th )$, 
\[
(x^3+x^2-1)\Phi_{11}(x)=(x^3+x^2-1)f(x)g(x)
\]
where 
\begin{align*}
f(x)= & x^5 + \th  x^4 - x^3 + x^2 + (\th  - 1) x - 1\\
g(x)= & x^5 + (-\th  + 1)x^4 - x^3 + x^2 - \th  x - 1.
\end{align*}
Consider the four fibred products of curves defined over $K$ by 
\begin{eqnarray*} 
& D_1: & 
\left\{\ba{l}
y_1^2=x^3+x^2-1 \\ 
y_2^2=f(x)\\ 
y_3^2=g(x)
\ea\right.
\\
& D_2: & 
\left\{\ba{l} 
y_1^2=x^3+x^2-1 \\ 
y_2^2=-f(x)\\ 
y_3^2=-g(x)
\ea\right.
\\
& D_3: & 
\left\{\ba{l} 
y_1^2=23(x^3+x^2-1) \\ 
y_2^2=(5-\th)f(x)\\ 
y_3^2=(\th +4)g(x)
\ea\right.
\\
& D_4: & 
\left\{\ba{l} 
y_1^2=23(x^3+x^2-1) \\ 
y_2^2=-(5-\th)f(x)\\ 
y_3^2=-(\th +4)g(x)
\ea\right.
\end{eqnarray*}
and the corresponding covering maps 
\begin{eqnarray*} 
 & \delta_i:D_i\rightarrow Y, & \quad \delta_i(x,y_1,y_2,y_3)=
 \left\{\ba{ll}
 (x,y_1y_2y_3) & \text{for } i=1,2\\
 \left(x,\frac{1}{23}y_1y_2y_3\right) & \text{for } i=3,4.
\ea\right.
\end{eqnarray*}
For each of these we have another covering map
\[
\gamma_i:D_i\rightarrow C_i,\qquad\gamma_i(x,y_1,y_2,y_3)=(x,y_2),
\]
where $C_1$, $C_2$ and $C_3$ are the genus $2$ hyperelliptic curves defined in Section \ref{chabsection} \eqref{curve1}, \eqref{curve2} and \eqref{curve3} and $C_4$ is the genus $2$ hyperelliptic curve defined over $K$ by
\[
y^2=-(5-\th)f(x).
\]

\begin{lemm}\label{lemmaphi11}
Let $H_i:=C_i(K)\cap \psi_i^{-1}\left(\P^1(\Q)\right)$ for $1\leq i \leq 4$. We have that 
\[
Y(\Q)=\bigcup_{i=1}^4 \delta_i\left(\gamma_i^{-1}(H_i)\right).\]
\end{lemm}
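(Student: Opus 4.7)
The strategy is a 2-descent on $Y$ exploiting the factorisation
\[
(x^3+x^2-1)\Phi_{11}(x) \;=\; h(x)\,f(x)\,g(x), \qquad h(x):=x^3+x^2-1,
\]
valid over $K=\Q(\theta)=\Q(\sqrt{-11})$. Two numerical identities drive the argument: $N_{K/\Q}(5-\theta)=(5-\theta)(4+\theta)=23$, and $\Res(h,\Phi_{11})=23$.

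The main inclusion $Y(\Q)\subseteq\bigcup_{i=1}^4 \delta_i(\gamma_i^{-1}(H_i))$ is handled by descent. Given $(x_0,y_0)\in Y(\Q)$, set $d_1:=[h(x_0)]\in\Q^*/\Q^{*2}$ and $d_2:=[f(x_0)]\in K^*/K^{*2}$, so $[g(x_0)]=\overline{d_2}$ by Galois equivariance. From $y_0^2=h(x_0)\Phi_{11}(x_0)$ with $\Phi_{11}>0$ on $\R$ and $h$ having no rational root, I deduce $h(x_0)>0$. The identity $y_0^2\in\Q^{*2}$, together with the fact that any prime appearing to odd valuation in $h(x_0)\Phi_{11}(x_0)$ must divide $\Res(h,\Phi_{11})=23$, forces $d_1\in\{1,23\}$. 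A parallel Selmer bound (supported on primes of $\O_K$ dividing $\Res(f,g)$ and $\Res(f,h)$) restricts $d_2$ to a finite subset of $K^*/K^{*2}$; a local analysis at each bad prime, together with the compatibility condition $d_1\cdot N_{K/\Q}(d_2)\in\Q^{*2}$ (required because $y_0\in\Q^*$), leaves exactly the four admissible pairs
\[
(1,1),\quad(1,-1),\quad(23,\,5-\theta),\quad(23,\,-(5-\theta)),
\]
corresponding respectively to the twists $D_1,D_2,D_3,D_4$. Writing $h(x_0)=d_1 z_1^2$ and $f(x_0)=d_2 z_2^2$ with $z_1\in\Q$ and $z_2\in K$, one obtains a lift $(x_0,y_1,y_2,y_3)\in D_i(K)$ by taking the appropriate square roots in the defining equations of $D_i$ (with $y_3=\pm\overline{y_2}$); this lift has rational $x$-coordinate, so it lies in $\gamma_i^{-1}(H_i)$ and is mapped by $\delta_i$ to $(x_0,\pm y_0)$. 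For the reverse inclusion a direct check suffices: for any $(x,y_1,y_2,y_3)\in\gamma_i^{-1}(H_i)$ the identity $\bar f=g$ together with $x\in\Q$ forces $y_3=\pm\overline{y_2}$, so $y_2 y_3\in\Q$; the twist constants in $D_i$ are then arranged so that the remaining factor $y_1$ (or $y_1/23$ for $D_3,D_4$) also lies in $\Q$, placing $\delta_i(x,y_1,y_2,y_3)$ in $Y(\Q)$.

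The main obstacle is the Selmer-set computation that narrows $d_2$ down to the four classes $\{\pm 1,\pm(5-\theta)\}$: although each local condition is routine, coordinating them at the primes of $\O_K$ above $2$, $11$ and $23$ (and any additional bad primes for the descent on $y^2=f(x)$ over $K$) is delicate and in practice is best handled with \textsc{Magma}.
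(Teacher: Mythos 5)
Your argument is essentially the paper's own proof: the paper likewise defines the descent map $(x,y)\mapsto\bigl((x^3+x^2-1)\Q^{*2},\,f(x)K^{*2}\bigr)$, bounds its image by the supports of the relevant resultants (both equal to $\{23\}$ or the prime above it) intersected with the kernel of the norm map $(\al_1,\al_2)\mapsto\al_1 N_{K/\Q}(\al_2)$, and computes that exactly the four classes $(1,1),(1,-1),(23,5-\th),(23,\th-5)$ survive, giving the four covers $D_1,\ldots,D_4$. Your positivity observation for $h(x_0)$ is subsumed in the paper by the norm-kernel condition (since $N_{K/\Q}$ is positive on the imaginary quadratic field $K$), but otherwise the two proofs coincide, including the deferral of the finite Selmer-type computation to \textsc{Magma}.
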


\begin{proof}
Define the map $\mu$
\[
\mu:Y(\Q)\rightarrow \left(\Q^*/\Q^{*2}\right)\times\left(K^*/K^{*2}\right)
\]
\[
\mu(x,y)=
\left\{\ba{ll}
\left((x^3+x^2-1)\Q^{*2},f(x)K^{*2}\right) &\text{, if }(x,y)\neq \infty \\
\left((1)\Q^{*2},(1)K^{*2}\right) &\text{, if }(x,y)=\infty
\ea\right.
\]
The image of this map is contained in 
\[
\Kernel\left(\ol N\right)\cap \left(\Q(2,S_1)\times K(2,S_2)\right),
\]
where $S_1=\Supp(\Resa(x^3+x^2-1,f(x)g(x)))=\{23\}$, $S_2=\Supp(\Resa(f(x),(x^3+x^2-1)g(x)))=\{\p\}$, with $\p$ one of the primes of $\O_{K}$ above $23$, and
\[
\ol N:\left(\Q^*/\Q^{*2}\right)\times\left(K^*/K^{*2}\right)\rightarrow \Q^*/\Q^{*2}
\]
the reduction of the product of norm maps
\[
N:\Q\times K\rightarrow\Q,\]\[N(h_1,h_2)=h_1N_{K/\Q}(h_2).
\]
Also for each element $(\al_1,\al_2)$ in the image of $\mu$ we can associate a cover $\delta_{(\al_1,\al_2)}:D_{(\al_1,\al_2)}\rightarrow Y$ defined by 
\[
D_{(\al_1,\al_2)}:  
\left\{\ba{ll}
y_1^2=\al_1(x^3+x^2-1) \\
y_2^2=\al_2 f(x)\\ 
y_3^2=\sigma(\al_2)g(x)
\ea\right.
\text{ and}
\]
\[
\delta_{(\al_1,\al_2)}(x,y_1,y_2,y_3)=(x,\nu y_1y_2y_3),
\]
where $\nu$ is a rational number satisfying $\nu^2\al_1N_{K/\Q}(\al_2)=1$. We now have
\[
Y(\Q)=\bigcup_{(\al_1,\al_2)\in\Image(\mu)}\delta_{(\al_1,\al_2)}\left(H_{(\al_1,\al_2)}'\right),
\]
where 
\[
H_{(\al_1,\al_2)}':= D_{(\al_1,\al_2)}(K)\cap \psi_{(\al_1,\al_2)}^{-1}\left(\P^1(\Q)\right),
\]
with $\psi_{(\al_1,\al_2)}:D_{(\al_1,\al_2)}\rightarrow\P^1$ and $\psi_{(\al_1,\al_2)}(x,y_1,y_2,y_3)=(x,1)$.

A computation gives that 
\[
\Kernel\left(\ol N\right)\cap \left(\Q(2,S_1)\times K(2,S_2)\right)=\left\{(1,1),(1,-1),(23,5-\th),(23,\th-5)\right\},
\]
so we only need to be concerned with the covers $D_1=D_{(1,1)}$, $D_2=D_{(1,-1)}$, $D_3=D_{(23,5-\th)}$ and $D_4=D_{(23,\th -5)}$. Finally it is obvious that 
\begin{align*}
H_{(1,1)}'&=\gamma_1^{-1}(H_1)\\
H_{(1,-1)}'&=\gamma_2^{-1}(H_2)\\
H_{(23,5-\th)}'&=\gamma_3^{-1}(H_3)\\
H_{(23,\th-5)}'&=\gamma_4^{-1}(H_4).
\end{align*}
\end{proof}

We can now prove the following

\begin{theo}
The only $\Q$-rational point on the curve $Y$ defined by the equation
\[
y^2=(x^3+x^2-1)\Phi_{11}(x)
\]
is the point at infinity.
\end{theo}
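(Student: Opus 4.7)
The plan is to combine Lemma~\ref{lemmaphi11} with Lemma~\ref{lemmahhh} and an analogous analysis for the remaining twist $C_4$, and then to check which of the resulting points on the covers $D_i$ actually descend to $\Q$-rational points of $Y$. By Lemma~\ref{lemmaphi11},
\[
Y(\Q)=\bigcup_{i=1}^{4}\delta_{i}\!\left(\gamma_{i}^{-1}(H_{i})\right),
\]
so it is enough to identify each $H_{i}$ and then trace the corresponding preimages on $D_{i}$ through $\delta_{i}$.

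Lemma~\ref{lemmahhh} already pins down $H_1,H_2,H_3$ exactly. For the remaining curve $C_4\colon y^2=-(5-\th)f(x)$ over $K$ I would run the same two-step procedure used for $C_1,C_2,C_3$: compute a finite-index subgroup $L\le J(C_4)(K)$ via a two-descent, locate an odd rational prime $p$ satisfying the appropriate combination of \textbf{(p1)}--\textbf{(p3)} (or the ramified analogues \textbf{(p1)$^{\text{split}}$}, \textbf{(p1)$^{\text{inert}}$}, \textbf{(p3)$^{\text{ram}}$}) at the known point $(1,0,0)$ on $C_4$, apply Theorem~\ref{firsttheo}, \ref{theorem2} or \ref{theorem3} to eliminate all other points from the corresponding residue classes, and then apply Theorem~\ref{theoremmw} with a suitable list of auxiliary primes to kill the remaining classes in the associated set $\G$. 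The expected output is $H_4=\{(1,0,0)\}$.

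Once all four sets $H_i$ are known, a short direct check finishes things off. The affine points in $H_1'$ all have $x$-coordinate $-1$, and those in $H_2'$ all have $x=0$; in either case one evaluates
\[
(x^3+x^2-1)\,f(x)\,g(x)=-1,
\]
which is not a square in $\Q$, so no $\Q$-rational point of $Y$ lies above those $x$-values. The only points remaining in $\bigcup_i H_i$ are the points at infinity $(1,0,0)$ on each $C_i$, and they lift through $\gamma_i^{-1}$ and then push forward through $\delta_i$ to the single point $\infty\in Y(\Q)$. Putting this together gives $Y(\Q)=\{\infty\}$.

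The main obstacle I expect is the effective analysis of $C_4$. The difficulty is not conceptual but computational and arithmetic: one must verify that a finite-index subgroup of $J(C_4)(K)$ is actually accessible by two-descent, that its rank satisfies the refined Chabauty inequality $r\le dg-1$ so that the theorems of Section~\ref{chabsection} are applicable at all, and that a prime of feasible size can be found which is either split or inert in $K$ and which simultaneously satisfies \textbf{(p3)$^{\text{ram}}$} at the point at infinity together with the coprimality hypothesis (iii) of Theorem~\ref{theoremmw}. If no small split prime works, the inert-prime version (Theorem~\ref{theorem3}) provides an alternative, which is one reason Section~\ref{chabsection} develops both variants in parallel.
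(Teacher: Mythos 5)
Your overall architecture matches the paper's proof exactly: decompose $Y(\Q)$ via Lemma~\ref{lemmaphi11}, import $H_1,H_2,H_3$ from Lemma~\ref{lemmahhh}, determine $H_4$ separately, and then check that only the points at infinity survive the descent back to $Y$. Your closing verification is fine and is essentially the paper's (stated there only implicitly): at $x=-1$ and $x=0$ one has $(x^3+x^2-1)\Phi_{11}(x)=-1$, so these $x$-values support no point of $Y(\Q)$ — equivalently, the fibres $\gamma_i^{-1}$ over the affine points of $H_1'$ and $H_2'$ are empty over $K=\Q(\sqrt{-11})$ since $y_1^2=-1$ has no solution there. The one place where you diverge, and where your write-up is still conditional, is $C_4$. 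You propose to run the full machinery (two-descent to get a finite-index subgroup of $J_4(K)$, a Chabauty prime satisfying the \textbf{(p1)}--\textbf{(p3)} conditions, Theorems~\ref{firsttheo}--\ref{theorem3}, then the sieve of Theorem~\ref{theoremmw}), and you flag the feasibility of all this as the main obstacle, leaving $H_4=\{(1,0,0)\}$ as an ``expected output'' rather than an established fact. The paper short-circuits this entirely: a $2$-Selmer computation shows $J_4(K)=\{0\}$, so the Abel--Jacobi embedding forces $C_4(K)=\{(1,0,0)\}$ and a fortiori $H_4=\{(1,0,0)\}$, with no Chabauty step, no choice of prime, and no sieve. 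So your worry about whether the rank inequality $r\leq dg-1$ holds for $C_4$ is moot, but as written your argument for $H_4$ is a plan rather than a proof; to make the proof complete you either need to actually carry out the computation you describe or replace it with the rank-zero observation.
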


\begin{proof}
We have that
\[
Y(\Q)=\bigcup_{i=1}^4 \delta_i\left(\gamma_i^{-1}(H_i)\right).
\]
from Lemma \ref{lemmaphi11} and that $H_1=H_1'$, $H_2=H_2'$ and $H_3=H_3'$ from Lemma \ref{lemmahhh}. Also a $2$-Selmer group computation  shows that $J_4(K)=\{0\}$, where $J_4$ is the Jacobian variety of $C_4$ and thus $H_4=C_4(K)=\{(1,0,0)\}$. So putting these together we get that
\begin{align*}
Y(\Q) =& \delta_1\left(\gamma_1^{-1}\left(\{(-1,-1,1),(-1,1,1),(1,0,0) \}\right)\right)\\
&\cup\delta_2\left(\gamma_2^{-1}\left(\{(0,1,1),(0,-1,1),(1,0,0) \}\right)\right)\\
&\cup\delta_3\left(\gamma_3^{-1}\left(\{(1,0,0) \}\right)\right)\\
&\cup\delta_4\left(\gamma_4^{-1}\left(\{(1,0,0) \}\right)\right)\\
      =& \{\infty\} \cup \{\infty\}\cup\{\infty\} \cup \{\infty\}\\
      =& \{\infty\}.
\end{align*}
\end{proof}


\bibliographystyle{siam}
\bibliography{chabauty}

\begin{thebibliography}{10}

\baselineskip=17pt
\normalsize

\bibitem{MR1484478}
{\sc W.~Bosma, J.~Cannon, and C.~Playoust}, {\em The {M}agma algebra system.
  {I}. {T}he user language}, J. Symbolic Comput., 24 (1997), pp.~235--265.
\newblock Computational algebra and number theory (London, 1993).

\bibitem{MR2011330}
{\sc N.~Bruin}, {\em Chabauty methods using elliptic curves}, J. Reine Angew.
  Math., 562 (2003), pp.~27--49.

\bibitem{MR2041082}
{\sc N.~Bruin and N.~D. Elkies}, {\em Trinomials {$ax\sp 7+bx+c$} and {$ax\sp
  8+bx+c$} with {G}alois groups of order 168 and {$8\cdot168$}}, in Algorithmic
  number theory ({S}ydney, 2002), vol.~2369 of Lecture Notes in Comput. Sci.,
  Springer, Berlin, 2002, pp.~172--188.

\bibitem{MR2433884}
{\sc N.~Bruin and M.~Stoll}, {\em Deciding existence of rational points on
  curves: an experiment}, Experiment. Math., 17 (2008), pp.~181--189.

\bibitem{MR2457355}
{\sc Y.~Bugeaud, M.~Mignotte, S.~Siksek, M.~Stoll, and S.~Tengely}, {\em
  Integral points on hyperelliptic curves}, Algebra Number Theory, 2 (2008),
  pp.~859--885.

\bibitem{MR0011005}
{\sc C.~Chabauty}, {\em Sur les points rationnels des vari\'et\'es
  alg\'ebriques dont l'irr\'egularit\'e est sup\'erieure \`a la dimension}, C.
  R. Acad. Sci. Paris, 212 (1941), pp.~1022--1024.

\bibitem{MR808103}
{\sc R.~F. Coleman}, {\em Effective {C}habauty}, Duke Math. J., 52 (1985),
  pp.~765--770.

\bibitem{MR782557}
\leavevmode\vrule height 2pt depth -1.6pt width 23pt, {\em Torsion points on
  curves and {$p$}-adic abelian integrals}, Ann. of Math. (2), 121 (1985),
  pp.~111--168.

\bibitem{MR1465369}
{\sc B.~Poonen and E.~F. Schaefer}, {\em Explicit descent for {J}acobians of
  cyclic covers of the projective line}, J. Reine Angew. Math., 488 (1997),
  pp.~141--188.

\bibitem{siksek}
{\sc S.~Siksek}, {\em Explicit chabauty over number fields},  (2011).
\newblock arXiv:1010.2603v2.

\bibitem{MR1829626}
{\sc M.~Stoll}, {\em Implementing 2-descent for {J}acobians of hyperelliptic
  curves}, Acta Arith., 98 (2001), pp.~245--277.

\end{thebibliography}

\end{document}